\documentclass[a4paper,11pt]{article}
\usepackage{amsmath}
\usepackage{amssymb}
\usepackage{theorem}
\usepackage{euscript}
\usepackage{pstricks}
\usepackage{authblk}
\usepackage{relsize}

\usepackage{hyperref}
\hypersetup{
	colorlinks,
	citecolor=green,
	filecolor=black,
	linkcolor=magenta,
	urlcolor=magenta
}
\hypersetup{linktocpage}

\usepackage{color}

\topmargin -0.0cm
\oddsidemargin -0.1cm
\textwidth  17cm 
\headheight 0.0cm
\textheight 23.5cm
\parindent  6mm
\parskip    9pt
\tolerance  1000


%



\def\II{{\mathbb I}}

\def\ZZ{{\mathbb Z}}
\def\NN{{\mathbb N}}
\def\RR{{\mathbb R}}

\def\Ff{{\mathcal F}}

\def\IId{{\mathbb I}^d}

\def\NNd{{\mathbb N}^d}
\def\RRd{{\mathbb R}^d}

\def\NNn{{\mathbb N}_{-1}}
\def\NNdn{{\mathbb N}^d_{-1}}

\def\Ff{{\mathcal F}}
\def\Ss{{\mathcal S}}

\def\Uas{\mathring{U}^{\alpha,d}_\infty}

\def\Uasone{\mathring{U}^{\alpha,1}_\infty}
\def\Uastwo{\mathring{U}^{\alpha,2}_\infty}

\def\Lad{H^{\alpha}_{\infty}(\II^d)}

\def\supp{\operatorname{supp}}


\newtheorem{theorem}{Theorem}[section]
\newtheorem{lemma}[theorem]{Lemma}
\newtheorem{corollary}[theorem]{Corollary}
\numberwithin{equation}{section}

\theoremstyle{definition}

\theoremstyle{remark}

\newcommand{\Chi}{\raise .3ex
	\hbox{\large $\chi$}}

\newcommand{\R}{\mathbb{R}}

\usepackage[section]{algorithm}
\usepackage{algorithmicx}
\usepackage{algpseudocode}
\algrenewcommand\algorithmicrequire{\makebox[46pt][l]{\textrm{required:}}}
\algrenewcommand\algorithmicensure{\makebox[46pt][l]{\textrm{output:}}}
\algrenewcommand\algorithmicfunction{\textrm{function}}
\algrenewcommand\algorithmicwhile{\textrm{while}}
\algrenewcommand\algorithmicdo{}
\algrenewcommand\algorithmicend{\textrm{end}}
\algrenewcommand\algorithmicforall{\textrm{for all}}
\algrenewcommand\algorithmicfor{\textrm{for}}
\algrenewcommand\algorithmicrepeat{\textrm{repeat}}
\algrenewcommand\algorithmicuntil{\textrm{until}}
\algrenewcommand\algorithmicif{\textrm{if}}
\algrenewcommand\algorithmicthen{\textrm{then}}
\algrenewcommand\algorithmicelse{\textrm{else}}

\newcommand{\ba}{{\boldsymbol{a}}}

\newcommand{\bee}{{\boldsymbol{e}}}

\newcommand{\bk}{{\boldsymbol{k}}}
\newcommand{\bh}{{\boldsymbol{h}}}

\newcommand{\bs}{{\boldsymbol{s}}}
\newcommand{\bt}{{\boldsymbol{t}}}
\newcommand{\bx}{{\boldsymbol{x}}}

\newcommand{\by}{{\boldsymbol{y}}}

\newcommand{\bxi}{{\boldsymbol{\xi}}}

\newcommand{\bell}{{\boldsymbol{\ell}}}

\newcommand{\btheta}{{\boldsymbol{\theta}}}

\newcommand{\blambda}{{\boldsymbol{\lambda}}}

\newcommand{\bone}{{\boldsymbol{1}}}


\usepackage{todonotes}

\newcommand{\be}{\begin{equation}}
\newcommand{\ee}{\end{equation}}
\newcommand{\beq}{\begin{eqnarray}}
\newcommand{\beqq}{\begin{eqnarray*}}
\newcommand{\eeq}{\end{eqnarray}}
\newcommand{\eeqq}{\end{eqnarray*}}

\title{{ 
			High-dimensional nonlinear approximation by parametric manifolds in 
		  H\"older-Nikol'skii  spaces of mixed smoothness
	  }}
\author[a]{Dinh D\~ung}
\affil[a]{Vietnam National University, Hanoi, Information Technology Institute
	\protect\\
	144 Xuan Thuy, Cau Giay, Hanoi, Vietnam
	\protect\\
	Email: dinhzung@gmail.com
}

\author[b]{Van Kien Nguyen}
\affil[b]{Faculty of Basic Sciences, University of Transport and Communications
	\protect\\	No.3 Cau Giay Street, Lang Thuong Ward, Dong Da District,
	Hanoi, Vietnam
	\protect\\
	Email: kiennv@utc.edu.vn}
\date{\today}
 \tolerance 2500
\begin{document}
	
\maketitle

\begin{abstract}
		 We study high-dimensional nonlinear approximation of functions in  H\"older-Nikol'skii spaces $\Lad$ on the unit cube $\IId:=[0,1]^d$ having  mixed smoothness, by parametric manifolds.  The approximation error is measured in the $L_\infty$-norm. In this context, we explicitly  constructed methods of nonlinear approximation, and give 
		dimension-dependent estimates of  the  approximation error explicitly in dimension $d$ and  number $N$ measuring computation complexity of the  parametric  manifold of approximants. For  $d=2$, we derived a novel right asymptotic order of noncontinuous manifold $N$-widths of the unit ball of $H^\alpha_\infty(\II^2)$ in the space $L_\infty(\II^2)$.
		In constructing approximation methods, the function decomposition by the tensor product Faber series and special representations of its truncations on sparse grids play a central role.

\medskip
\noindent
{\bf Keywords and Phrases:} {High-dimensional problem; Nonlinear approximation; Parametric manifold; Mixed smoothness; Sparse grids}

\medskip
\noindent
{\bf Mathematics Subject Classifications (2020)} 41A46; 41A15; 41A05; 41A25; 41A58; 41A63

\end{abstract}

\section{Introduction}	

%
%

Some problems in approximation theory and numerical analysis driven by  a lot of applications in  Information Technology, Mathematical Finance, Chemistry, Quantum Mechanics, Meteorology, and, in particular, in Uncertainty Quantification and Deep Machine Learning, are formulated in  high dimensions when the number of involved variables are very large.   Numerical methods for such problems may require computational cost increasing exponentially in dimension  which makes the computation  intractable when the dimension of input data is very large. 
Hyperbolic crosses and sparse grids promise to rid this ``curse of dimensionality" in some  problems when  high-dimensional data belongs to certain classes of functions having  mixed smoothness. Function spaces having mixed smoothness appear naturally in many models of real world problem in mathematical physics,  finance and other fields, for instance,  the regularity properties  eigenfunctions of the electronic Schr\"odinger operator  \cite{Yser10} or the existence of solution of Navier-Stokes equations when initial data belonging to spaces with mixed smoothness \cite[Chapter 6]{Tri15B}. Approximation methods and sampling algorithms for functions having mixed smoothness constructed on  hyperbolic crosses and  sparse grids   give a surprising effect since hyperbolic crosses and sparse grids have the number of elements much less than those of standard domains and grids but give the same approximation error. This essentially reduces the computational cost, and therefore makes the problem tractable. Sparse grids for  approximate sampling recovery and integration were first considered by Smolyak	\cite{Smo63}.
In computational mathematics, the sparse grid approach was  initiated by Zenger  
\cite{Zen91}. There has been a very large number of papers on hyperbolic cross and sparse-gird approximation and numerical applications  to count all of them. We refer the reader to \cite{BuGr04,DTU18B} for surveys and for recent further developments and results. We also refer to the monographs \cite{NoWo08,NoWo10} for concepts and results on high dimensional problems and computation complexity.

 Let  us mention some    recent results on different aspects of the problem of dimension-dependent error estimation in high-dimensional approximation  which are directly related to our papers. 	
	The papers \cite{ChD16,CKS16,CKS19,DG16,DU13,DGHR18,KMU16,KSU15} are on this problem for hyperbolic cross approximation of functions with mixed smoothness in terms of various $n$-widths and $\varepsilon$-dimensions. The authors of \cite{DG16,DGHR18} in particular, extended these problems for infinite-dimensional approximation with applications to stochastic and parametric PDEs.  Preasymptotic estimation of  high-dimensional problems were also treated in \cite{KMU16,KSU14,KSU15}.  Related high-dimensional problems were studied in  \cite{PS19,TV19} based on ANOVA decomposition.  	
The paper \cite{DTU18B} has investigated dimension-dependent estimates of the approximation error for linear algorithms of
sampling recovery on Smolyak grids of functions from the space with H\"older–Zygmund mixed smoothness.  It proved some
upper bounds and lower bounds of the error of the optimal sampling recovery on Smolyak grids, explicit in dimension.	All of the above mentioned papers considered only linear problems of high-dimensional approximation.

While linear methods utilize approximation from finite-dimensional spaces,  nonlinear approximation means that the approximants do not come from linear spaces but rather from
sets of nonlinear structure such as nonlinear manifolds, set of finite cardinality,$\ldots$  It is well understood that nonlinear methods of approximation and numerical methods derived from them often produce superior performance when compared with linear methods. Several notions of linear and nonlinear widths have been introduced to quantify optimality of approximation methods. Let us recall some of them.

Let $X$ be a normed space, $F$ and $G$  subsets in $X$. We consider the problem of approximation of  $f \in F$ by elements $g \in G$. 
The approximation error is measured by $\|f - g\|_X$. 
The worst case  error of the approximation of elements $f \in F$ by elements $g \in G$ is defined as 
\begin{equation}\nonumber
E(F,G,X) := \ \sup_{f \in F} E(f,G,X)
:= \ \sup_{f \in F} \inf_{g \in G} \|f - g\|_X.
\end{equation}
In numerical applications, an (linear and nonlinear) approximation method is usually based on  a finite information in the form of the $N$ values  $b_1(f),\ldots, b_N(f)$ of functionals. Such an approximation method can be seen as
\begin{equation}\label{Q_N}
Q_N(f) = P_N(a_N(f)) \quad \text{for a pair of mappings} \ \ a_N:\, F \to \R^N \ \ \text{and} \  \ P_N: \, \R^N \to X.
\end{equation} 
The approximant set $G_N:= P_N(\RR^N)$ can be seen as a manifold in $X$ parameterized by $\RR^N$. The parameter $N$ characterizes computation complexity  of the approximation method. We specify  approximation methods having common properties by a certain set ${\mathcal Q}_N$ of pairs $(a_N,P_N)$, and look for an optimal method $Q_N \in {\mathcal Q}_N$ of approximation of $f \in F$ in terms of the quantity
\begin{equation}\label{E_N}
 d(F,{\mathcal Q }_N,X):= \inf_{(a_N,P_N) \in {\mathcal Q}_N} \ \sup_{f \in F}  \|f - P_N(a_N(f))\|_X.
\end{equation}
It is remarkable that the definition \eqref{E_N} is fit to notion of some important quantities of best linear and nonlinear approximation.
Thus, if linear approximation is understood as approximation  by elements from a finite-dimensional linear subspace, the well-known Kolmogorov widths $d_N(F,X)$ and linear $N$-widths   $\lambda_N(F,X)$  being  different quantities of best  linear approximation can be defined  as
\begin{equation}\label{d_N}
d_N(F,X)
:= \
d(F,{\mathcal Q }_N^d,X)
\ = \
\inf_{\text{linear subspaces }   X_N\atop \text{dim} X_N \le N}\ E(F,X_N),
\end{equation}
and 
$\lambda_N(F,X):= d(F,{\mathcal Q }_N^{\lambda},X)$,
where ${\mathcal Q }_N^{d}$ is the set of all pairs of  mappings $(a_N,P_N)$ such that $P_N$ maps $\RR^N$ to  some linear subspace $X_N \subset X$ of dimension at most $N$, and ${\mathcal Q }_N^{\lambda}$ is the set of all pairs of  linear mappings $(a_N,P_N)$. 
Here the right-hand side of \eqref{d_N} is the traditional definition of Kolmogorov $N$-widths (see, e.g., \cite{DTU18B} for a traditional definition of  linear $N$-widths).

We next discuss the definition \eqref{E_N} for  some quantities of best nonlinear approximation.  
 The first  notion given in \cite{DHC89}, is (continuous) manifold $N$-width and defined by requiring $(a_N,P_N)$ to be continuous:
\begin{equation*}\label{delta_N}
\delta_N(F,X):= \ d(F,{\mathcal Q }_N^{\delta},X)
:= \ 
\inf_{(a_N,P_N)  \in {\mathcal Q }_N^{\delta}} \ \sup_{f \in F}  \|f - P_N(a_N(f))\|_X,
\end{equation*}
where ${\mathcal Q }_N^{\delta}$ is the set of all pairs of continuous mappings $(a_N,P_N)$.  Here the approximant set $G_N:= P_N(\RR^N)$ is a continuous manifold in $X$.


The requirement of continuity on $a_N,P_N$ is too minimal and does not give stability used in practice.  To have stability in the numerical implementation  one  can restrict  mappings $a_N$ and $P_N$ to be  Lipschitz continuous. Based on this idea, in \cite{CDPW20} the authors have introduced a notion of  stable manifold $N$-widths by the formula $\delta^*_N(F,X):= d(F,{\mathcal Q }_N^{\delta^*},X)$, where  ${\mathcal Q }_N^{\delta^*}$ is the subset in 
${\mathcal Q }_N^{\delta}$ of  all  Lipschitz mappings $a_N,P_N$ with some fixed constant $\gamma \ge 1$, that is 
$|a_N(f) - a_N(g)| \le \gamma  \|f - g\|_X$, and   $\|P_N(\bx) - P_N(\by)\|_X \le \gamma  |\bx - \by|$, $\bx, \by \in \RR^N $ with the Euclid norm $|\cdot|$.

However, in many numerical applications 
approximation methods do not have continuous properties.
The  nonlinear $N$-width which is not based on continuity condition was suggested by Kolmogorov (1955)  in the form of inverse quantity,  $\varepsilon$-entropy. This is entropy $N$-width 
\begin{equation*}\label{e_N}
\varepsilon_N(F,X)
:= \
d(F,{\mathcal Q }_N^\varepsilon,X)
\ = \
\inf_{ X_N \subset X:\ |X_N| \le 2^N}\ E(F,X_N),
\end{equation*}
where  ${\mathcal Q }_{N}^{\varepsilon}$ is the set of all pairs of  mappings $(a_N,P_N)$ such that $a_N$ maps $F$ into $\{0,1\}^N \subset \RR^N$  and $|X_N|$ denotes the cardinality of  $|X_N|$.

Another way to avoid the continuous restriction to require the approximant set $G_N:= P_N(\RR^N)$ which is in general, noncontinuous  manifold parameterized by $\RR^N$,  to be contained in a finite-dimensional linear subspace.
This leads to a notion of (noncontinuous) nonlinear manifold $N,M$-width $d_ {N,M}(F,X)$ of a  subset $F$ in $X$ as
\begin{equation*}\label{gamma_N}
d_ {N,M}(F,X)
:= \
d(F,{\mathcal Q }_{N,M}^{d},X)
\ = \
\inf_{(a_N,P_N) \in {\mathcal Q }_{N,M}^{d} } \ \sup_{f \in F}  \|f - P_N(a_N(f))\|_X,
\end{equation*}
where  ${\mathcal Q }_{N,M}^{d}$ is the set of all pairs of  mappings $(a_N,P_N)$ such that $P_N$ maps $\RR^N$ to  some linear subspace $X_M \subset X$ of dimension at most $M$. The parameter  $M$ in some sense only controls  the linear dimension of the parametric manifold $P_N(\RR^N)$, but is not related to computation complexity  of the approximation method which is as above mentioned, characterized by the parameter $N$. Notice that with $N \le M$ 
\begin{equation}\label{d_M<d_N,M}
d_M(F,X) \ \le \ d_ {N,M}(F,X) \ \le \ d_N(F,X) \quad \text{and} \quad d_ {N,N}(F,X)= d_N(F,X).
\end{equation}

 One may assume that $N$ and $M$ are comparable, in particular, take  $M = M(N)$  with the restriction  $N \le M(N) \le   C N (\log N)^\kappa$ for some $\kappa \ge 0$ and $C \ge 1$. With this assumption  $d_ {N,M(N)}(F,X)$ now only depends on $N$. It is surprising that 
for some cases,  $d_ {N,M(N)}(F,X)$ may have asymptotic order less than asymptotic order of any known nonlinear $N$-widths. This is confirmed at least  by the following example.

Let $\Uas$ be the unit ball of the H\"older-Nikol'skii space of functions on the unit cube $\II^d$ of mixed smoothness $0< \alpha \le 1$ with zero on the boundary of $\IId$ (see Section  \ref{sec:fabersystem} for a definition). Then when $d=1$ we have that 
\begin{equation}\label{delta_Nasymp}
d_N(\Uasone, L_\infty(\II))
\ \asymp \
\delta_N(\Uasone, L_\infty(\II))
\ \asymp \
N^{-\alpha},
\end{equation}
and 
\begin{equation}\label{gamma_Nasymp}
d_ {N,\lfloor N \log N\rfloor}(\Uasone, L_\infty(\II))
\ \asymp \
(N \log N)^{-\alpha}.
\end{equation}
The asymptotic order  of $d_N(\Uasone, L_\infty(\II))$ in \eqref{delta_Nasymp} is well-known, see, e.g. , \cite{Tem18B} for references. The asymptotic order  of $\delta_N(\Uasone, L_\infty(\II))$ in \eqref{delta_Nasymp} was proven in \cite{DHC89}. The upper bound of \eqref{gamma_Nasymp} follows from recent results obtained in \cite{Ya17b} for the case $\alpha = 1$ and in \cite{DDF.19} for the case $\alpha\in (0,1)$.  The lower bound of \eqref{gamma_Nasymp}  follows from the  inequalities
$$d_ {N,\lfloor N \log N\rfloor}(\Uasone, L_\infty(\II)) \ge d_{\lfloor N \log N\rfloor}(\Uasone, L_\infty(\II)) \asymp (N \log N)^{-\alpha}.$$ 
Here, we want to emphasize  that the right asymptotic order of Kolmogorov width $d_N(\Uas, L_\infty(\IId))$ and  linear $N$-widths $\lambda_N(\Uas, L_\infty(\IId))$ as well as of nonlinear $N$-widths for $d\ge 2$ is open problems except the case $d=2$ (see \cite[Chapter 4]{DTU18B} for detailed comments).

 All the above remarks and comments motivate us to consider    high-dimensional nonlinear approximation by parametric manifolds  for functions from the unit ball $\Uas$ of H\"older-Nikol'skii spaces having  mixed smoothness $\alpha$. The approximation error is measured in the $L_\infty$-norm.  In this context, we investigate the explicit construction of approximation methods of the form \eqref{Q_N}  with $(a_N,P_N) \in {\mathcal Q }_{N,M(N)}^{d}$ for approximation of $f \in \Uas$, and  explicit estimates in dimension $d$ and $N$ of the approximation error. We also treat the problem of  right asymptotic order of $d_ {N,M(N)}(\Uas,L_\infty)$. (Here and in what follows, we use the abbreviation: $L_\infty:=L_\infty(\IId)$ and $\|\cdot\|_\infty:= \|\cdot\|_{L_\infty}$.)

 Let us briefly describe our main contribution. 
 Let    $N\in \NN$ with 
$N\geq N(d)$ be given and $M(N):=\big\lfloor \frac{(12d^3)^{d-1}}{(d-1)!}N(\log N)(\log \log N)^{-(d-1)} \big\rfloor$ where $N(d)$ is a certain number (see \eqref{N>}). Then  we  can  explicitly construct 
a $M(N)$-dimensional subspace $F_{M(N)}$ of continuous   functions on $\IId$ spanned by tensor product Faber basis functions,  and maps
\begin{equation} \label{lambda_N,G_N}
\blambda^*_N: \Uas \to    \RR^N \qquad {\rm and}  \qquad G^*_N: \RR^N \to F_{M(N)}\subset C(\IId),
\end{equation}
so that 
\begin{align} \label{upperbnd}
	\sup_{f\in \Uas}\|f-G_N^*(\blambda^*_N(f))\|_\infty
\	\leq  \
	 C_\alpha \left(\frac{K^{d-1}}{(d-1)!}\right)^{2\alpha +1} \frac{(\log N)^{(d-1)(\alpha+1)}}{(N\log N)^{\alpha} }  (\log\log N)^{(d-1)\alpha},
\end{align}
and 
\begin{align} \label{lowerbnd}
\sup_{f\in \Uas}\|f-G_N^*(\blambda^*_N(f))\|_\infty
\ \geq \
C_{\alpha,d}	\frac{(\log N)^{(d-1)(\alpha +\frac{1}{2})}}{(N\log N)^{\alpha} }  (\log\log N)^{(d-1)\alpha},
\end{align}
where $K:= \left(4^\alpha 6/ (2^{\alpha}-1)\right)^{1/(2\alpha +1)}$, $C_\alpha:=   2^{7\alpha+2}/(2^{\alpha}-1)$ and  the constant $C_{\alpha,d}$ depends on $\alpha,d$ only.
 In the case $d=1$, \eqref{upperbnd} follows from results on approximation by deep ReLU networks which  have been proven in \cite{Ya17b} ($\alpha=1$) and  \cite{DDF.19} ($\alpha\in (0,1)$).  Notice the term $\big(\frac{K^{d-1}}{(d-1)!}\big)^{2\alpha +1}$  in the right-hand of \eqref{upperbnd} decays super-exponentially  when $d \to \infty$.  To our knowledge  \eqref{upperbnd} is the first result on  dimension-dependent error estimation of  nolinear approximation of functions having mixed smoothness.

From  \eqref{upperbnd} and \eqref{lowerbnd} we also derived some upper and lower  estimates for the noncontinuous manifold widths $d_ {N,M(N)} (\Uas,L_{\infty})$ (see Corollary \ref{cor:d_NM}).
Especially,  when  $d=2$ we obtain the novel right asymptotic order  
\begin{equation*}  
	\begin{aligned}
		d_ {N,\lfloor N(\log N)(\log \log N)^{-1}\rfloor} (\Uastwo,L_\infty(\II^2))
\asymp
	N^{-\alpha}	\log N  (\log\log N)^{\alpha}.
	\end{aligned}
\end{equation*}
(The case $d=1$ already is given in \eqref{gamma_Nasymp}.)
Let us compare this asymptotic order with the asymptotic order of other well-known $N$-widths.  We have  for $\alpha\in (0,1)$ that
\begin{equation}\label{eq-manifold}
	\begin{aligned}
&\varepsilon_N(\Uastwo,L_\infty(\II^2))
\asymp 
\delta^*_N(\Uastwo,L_\infty(\II^2))
\\
&
\asymp 
d_ N (\Uastwo,L_\infty(\II^2))
\asymp
\lambda_ N (\Uastwo,L_\infty(\II^2))
\asymp
N^{-\alpha} (\log N)^{\alpha+1}.
	\end{aligned}
\end{equation}
The results in \eqref{eq-manifold} were proven in \cite{Tem95} for entropy $N$-widths, and in \cite{Tem96} Kolmogorov $N$-widths.  For the asymptotic order of $\lambda_N(\Uastwo,L_\infty(\II^2))$ in \eqref{eq-manifold} see \cite[p. 67]{DTU18B}. The asymptotic order of $\delta^*_N(\Uastwo,L_\infty(\II^2))$ in \eqref{eq-manifold} follows from a Carl's type  inequality between  $\varepsilon_N$ and $\delta^*_N$ \cite{CDPW20} and  the  inequality  $\delta^*_N \le \lambda_N$. 
To  knowledge of the authors, the asymptotic order of  $\delta_ N(\Uastwo,L_\infty(\II^2))$ is not  known, except the upper bound via the inequality   $\delta_N \le d_N$ and \eqref{eq-manifold}.
Comparing the asymptotic order of  $d_ {N,\lfloor N(\log N)(\log \log N)^{-1}\rfloor} (\Uastwo,L_\infty(\II^2))$ with the asymptotic order of the ``smallest" entropy $N$-widths  $\varepsilon_N(\Uastwo,L_\infty(\II^2))$ and the other $N$-widths in \eqref{eq-manifold}, we find that the first one is smallest in logarithm scale.

 In construction of approximation and estimation of the approximation error, a representation of functions  in $\Uas$ by tensorized Faber series plays a central role. We primarily approximate $f\in \Uas$ by the truncations of tensorized Faber series $R_m(f)$ on sparse grids, and then approximate the function $f-R_m(f)$  by combining a sparse-grid interpolation approximation and an approximation by sets of finite cardinality.

The outline of this  paper is as follows. In Section  \ref{sec:fabersystem}, we present some auxiliary knowledge: a definition of H\"older-Nikol'skii  spaces of mixed smoothness $\Lad$ and a representation of functions in $\Lad$ based on tensorized Faber basis. In this section, we also study 
auxiliary approximation of  functions $f\in \Uas$ by  truncations of tensorized Faber series $R_m(f)$ on sparse grids,   and  approximation of $f$ by sets of finite cardinality. Section \ref{sec-nonlinear-manifold} is devoted to construction of  manifold approximation  for functions in H\"older-Nikol'skii spaces and estimation of the approximation error. 

\noindent
{\bf Notation.} \ As usual, $\NN$ is the natural numbers, $\ZZ$ is the integers, $\RR$ is  the real numbers and $ \NN_0:= \{s \in \ZZ: s \ge 0 \}$; $\NNn= \NN_0\cup \{ -1\} $. 
The letter $d$ is  reserved for
the underlying dimension of $\RR^d$, $\NN^d$, etc. We use  $x_i$ to denote the $i$th coordinate 
of $\bx \in \RR^d$, i.e., $\bx := (x_1,\ldots, x_d)$. For $\bx, \by \in \RR^d$,  $\bx \by $ denotes
the  Euclidean inner product of $\bx, \by$, and
$2^\bx := (2^{x_1},\ldots,2^{x_d})$. For $\bk, \bs \in \NNd_0$,  we denote $2^{-\bk}\bs := (2^{-k_1}s_1,\ldots,2^{-k_d}s_d)$.  For $\bx\in \RR^d$, we denote 
$|\bx|_1 := |x_1|+\ldots+|x_d|$. We use the abbreviation: $L_\infty:= L_\infty(\IId)$ and $\|\cdot\|_\infty:= \|\cdot\|_{L_\infty}$.
Universal constants or constants depending on parameter $\alpha, d$ are denoted by $C$ or $C_{\alpha,d}$, respectively.  Values of constants $C$ and  $C_{\alpha,d}$ in general, are not specified except the case when they are precisely given, and may be different in various places. For two sequences $a_n$ and $b_n$ we will write $a_n \lesssim b_n$ if there exists a
constant $C>0$ such that $a_n \leq C\,b_n$ for all $n$, and  $a_n \asymp b_n$ if $a_n \lesssim b_n$ and $b_n
\lesssim a_n$.  $|A|$ denotes the cardinality of the finite set $|A|$.


\section{ Approximation by truncated Faber series}\label{sec:fabersystem}

This section presents some preliminaries. We first
provide a definition of H\"older-Nikol'skii  spaces of mixed smoothness $\Lad$ and certain properties of these spaces.  As a preparation for the manifold approximation in the next section we recall a representation of continuous functions on the unit cube  by the tensorized Faber series.  We then give an estimate for the representation coefficients of functions   from  H\"older-Nikol'skii spaces and the error of the approximation of $f\in \Lad$ by truncations of the tensorized Faber series $R_m(f)$. In the last part of this section, a set of finite cardinality is  explicitly constructed to approximate functions in  $\Lad$ and approximation error is given explicitly in $d$.

\subsection{H\"older-Nikol'skii  spaces of mixed smoothness}
 This subsection is devoted to introducing the H\"older-Nikol'skii  spaces of mixed smoothness  under consideration. For univariate functions $f$ on $\II$, the difference operator $\Delta_h$ is defined by 
\begin{equation*}
\Delta_h f(x) := \
f(x + h) -  f(x),
\end{equation*}
for all $x$ and $h \ge 0$ such that $x, x + h \in \II$. 
If $u$ is a subset of $\{1,\ldots,d\}$, for multivariate functions $f$ on $\IId$
the  mixed  difference operator $\Delta_{\bh,u}$ is defined by 
\begin{equation*}
\Delta_{\bh,u} := \
\prod_{i \in u} \Delta_{h_i}, \quad \Delta_{\bh,\varnothing} = {\rm Id},
\end{equation*}
for all $\bx$ and $\bh$ such that $\bx, \bx +\bh \in \IId$. Here the univariate operator
$\Delta_{h_i}$ is applied to the univariate function $f$ by considering $f$ as a 
function of  variable $x_i$ with the other variables held fixed. 
If $0 < \alpha \le 1$, 
we introduce the semi-norm 
$|f|_{H^{\alpha}_{\infty}(u)}$ for functions $f \in C(\IId)$ by
\begin{equation} \nonumber
|f|_{H^{\alpha}_{\infty}(u)}:= \
\sup_{\bh > 0} \ \prod_{i \in u} h_i^{-\alpha}\|\Delta_{\bh,u}(f)\|_{C(\IId(\bh,u))}
\end{equation}
(in particular, $|f|_{H^{\alpha}_{\infty}(\varnothing)}= \|f\|_{C(\IId)}$), where 
$\IId(\bh,u):= \{\bx \in \IId: \, x_i + h_i \in \II, \, i \in u\}$.
The  H\"older-Nikol'skii space 
$\Lad$ of mixed smoothness $\alpha$ then is defined as the set of  functions $f \in C(\IId)$ 
for which the  norm 
\begin{equation*} 
\|f\|_{\Lad}
:= \ 
\max_{u \subset \{1,\ldots,d\}} |f|_{H^{\alpha}_{\infty}(u)}
\end{equation*}
is finite. 
From the definition we have that $\Lad \subset C(\IId)$. The space $\Lad$ is a $d$-time tensor product of the space $H_\infty^\alpha(\II)$ in the sense of equivalent norms. For further properties of this space such as embeddings, characterization by wavelets and atoms, we refer the reader to \cite{Ni75B,ST87B,Vy06} and references there. 

Denote by  $\mathring{C}(\IId)$ the set of all functions $f \in C(\IId)$ vanishing on the boundary $\partial  \IId$ of $\IId$, i.e., the set of all functions $f \in C(\IId)$ such that  $f(x) =0$ if  $x_j=0$ or $x_j=1$  for some index $j \in \{1,\ldots,d\}$.
Let $\Uas$ be the set of all functions $f$ in the intersection 
$\Lad \cap \mathring{C}(\IId)$  such that $\|f\|_{\Lad} \le 1$. 

\subsection{Tensorized Faber series and sparse-grid interpolation sampling recovery}
 In this subsection we describe  a representation
of functions in $\Lad$ by tensorized Faber series which plays a central role in the construction of nonlinear methods of noncontinuous manifold approximation of functions from the unit ball $\Uas$.  We give a dimension-dependent estimate of the approximation error by  truncation ${R}_m(f)$ of the tensorized Faber series for functions $f \in \Uas$. The approximant ${R}_m(f)$ represents an interpolation sampling recovery  on  sparse Smolyak grids.

We start with the univariate case. Let  $\varphi(x)\ = \ (1 - |x-1|)_+$, $x \in \II$, be  the hat function (the piece-wise linear B-spline with knots at $0,1,2$), where 
$x_+:= \max(x,0)$ for $x \in \RR$. 
For $k\in \NNn$ we define the Faber functions $\varphi_{k,s}$ by
\begin{equation*}\label{eq:faber1}
\varphi_{k,s}(x):=
\varphi(2^{k+1}x - 2s), \quad k \geq 0,  \ s \in Z(k):=\{0,1,\ldots, 2^{k} - 1\},
\end{equation*}
and
\begin{equation*}\label{eq:faber2}
\varphi_{-1,s}(x) := \varphi (x - s + 1),\ \ s\in Z(-1):=\{0,1\}.
\end{equation*}
For a univariate function $f$ on $\II$, 
$k \in \NNn$, and $s\in Z(k)$ we define
\begin{equation*} 
\lambda_{k,s}(f) \ := 
- \frac {1}{2} \Delta_{2^{-k-1}}^2 f\big(2^{-k}s\big),\ \ k \ge 0, \quad 
\lambda_{-1,s}(f) \ := f(s).
\end{equation*}
Here 
$$
\Delta_h^2 f(x) := \
f(x + 2h) -  2f(x+h)+f(x),
$$
for all $x$ and $h \ge 0$ such that $x, x + h \in \II$. 
The functions $\varphi_{k,s}$, $k \in \NN_{-1}$, $s \in Z(k)$, constitute a basis for $C(\II)$ and  every function 
$f \in C(\II)$ can be represented by the Faber series \cite{Fab09}
\begin{equation} \label{eq:FaberRepresentationd=1}
f
\ = \
\sum_{k \in \NN_{-1}} q_{k}(f), \ \qquad q_{k}(f) := \sum_{s\in Z(k)} \lambda_{k,s}(f)\varphi_{k,s}
\end{equation}
converging in the norm of $C(\II)$.

For $m\in \NN_0$, we define the truncation of the Faber series ${R}_m(f)$ by
\[{R}_m(f) := \sum_{k=0}^m q_k(f).
\]
The continuous piece-wise linear function ${R}_{m}(f) \in \mathring{C}(\II)$ possesses a certain interpolatory property. 
Indeed, one can check  that for $f \in \mathring{C}(\II)$
\begin{equation} \label{R_m=Q_m}
	{R}_{m}(f) 
	\ = \
	\sum_{s \in Z_*(m)} f(2^{-m-1}s)\varphi^*_{m,s},
\end{equation}
where for $k\in \NN_0$,
\begin{equation*}\label{phi^*_ks}
\varphi^*_{m,s}(x):=
\varphi(2^{m+1}x - s + 1), \quad s \in Z_*(m):=\{1,\ldots, 2^{m+1} - 1\}.
\end{equation*}
Hence one can see that ${R}_{m}(f)$ interpolates $f$ at the points 
$2^{-m-1}s$, $s \in Z_*(m)$, that is, 
\begin{equation*} \label{R_m=f}
	{R}_{m}(f)(2^{-m-1}s) 
	\ = \
	f(2^{-m-1}s), \quad 
	s \in Z_*(m). 
\end{equation*}

We  next extend the representation \eqref{eq:FaberRepresentationd=1} to functions in $C(\IId)$ by tensorization of the univariate  Faber basis.
Putting 
$$
Z(\bk):={\mathlarger{\mathlarger{\mathlarger{\mathlarger{\times}}}}}_{i=1}^d Z(k_i),
$$
for $\bk \in \NNdn$, $\bs \in Z(\bk)$, we introduce  the tensor product Faber functions
\begin{equation*} \label{hat-function}
\varphi_{\bk,\bs}(\bx)
\ := \
\prod_{i=1}^d \varphi_{k_i,s_i}(x_i),\quad \bx\in \IId,
\end{equation*}
and define the linear functionals $\lambda_{\bk,\bs}$ for multivariate function $f$ on $\IId$ by 
\begin{equation*} 
\lambda_{\bk,\bs}(f) \ := 
\prod_{i=1}^d \lambda_{k_i,s_i}(f),
\end{equation*}
where the univariate functional $\lambda_{k_i,s_i}$ is applied to the univariate function $f$ by considering $f$ as a function of variable $x_i$ with the other variables held fixed. 


\begin{lemma} \label{lemma[convergence(d)]}
	The  tensorized functions
$
	 \big\{\varphi_{\bk,\bs}: \ \bk \in \NNdn,\, \bs\in Z(\bk)\big\}
$ are a basis in $C(\IId)$. Moreover,  every function 
	$f \in C(\IId)$ can be represented by the tensorized Faber series 
	\begin{equation} \label{eq:FaberRepresentation}
	f
		\ = \
	\sum_{\bk \in \NNdn} q_{\bk}(f), \ \qquad q_{\bk}(f) := \sum_{\bs\in Z(\bk)} \lambda_{\bk,\bs}(f)\varphi_{\bk,\bs}
	\end{equation}
	converging in the norm of $C(\IId)$.
\end{lemma}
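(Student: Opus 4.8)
The plan is to deduce the multivariate statement from the univariate representation \eqref{eq:FaberRepresentationd=1} by a tensorization argument whose engine is the uniform boundedness of the univariate partial-sum operators combined with an equicontinuity estimate. First I would work in one variable. For $m \in \NN_0$ set $P_m f := \sum_{k=-1}^m q_k(f)$, the full partial sum, so that $P_m f = q_{-1}(f) + {R}_m(f)$. Arguing as for the interpolatory identity \eqref{R_m=Q_m}, $P_m f$ is the continuous piecewise linear interpolant of $f$ at the dyadic nodes $2^{-m-1}s$, $s = 0,1,\ldots,2^{m+1}$ (the term $q_{-1}(f)$ restoring interpolation at the two endpoints). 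Since a piecewise linear interpolant attains its extrema at the nodes, $\|P_m f\|_{C(\II)} \le \|f\|_{C(\II)}$, hence $\|P_m\| \le 1$ uniformly in $m$; moreover the interpolation error is controlled by the modulus of continuity, $\|f - P_m f\|_{C(\II)} \le \omega(f, 2^{-m-1})$, which together with \eqref{eq:FaberRepresentationd=1} records the univariate convergence $P_m f \to f$.

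Next I would tensorize. Because each univariate functional $\lambda_{k_i,s_i}$ acts on a single variable, the coordinatewise interpolation operators, still denoted $P_{m_j}$ when regarded as operators on $C(\IId)$ acting in the $j$th variable, each have norm $\le 1$ on $C(\IId)$ and commute pairwise. The rectangular partial sum of \eqref{eq:FaberRepresentation} is then precisely their product:
\begin{equation*}
P_{\bm} f := \sum_{\bk:\, -1 \le k_i \le m_i,\ 1 \le i \le d} q_{\bk}(f) = P_{m_1} P_{m_2} \cdots P_{m_d} f, \qquad \bm = (m_1,\ldots,m_d),
\end{equation*}
so that $\|P_{\bm}\| \le 1$ for every $\bm$.

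The main point is the uniform convergence $P_{\bm} f \to f$ as $\min_i m_i \to \infty$. Here I would expand the error by the telescoping identity
\begin{equation*}
I - P_{m_1}\cdots P_{m_d} = \sum_{j=1}^d P_{m_1}\cdots P_{m_{j-1}}\,(I - P_{m_j}),
\end{equation*}
where the $j$th term applies $P_{m_i}$ in the coordinates $i<j$, the error operator $I - P_{m_j}$ in coordinate $j$, and the identity in the coordinates $i>j$. Since the leading factors have norm $\le 1$ and, for each fixed values of the remaining variables, $(I - P_{m_j})$ is exactly the univariate interpolation error in $x_j$, I obtain
\begin{equation*}
\|f - P_{\bm} f\|_\infty \ \le \ \sum_{j=1}^d \omega\big(f, 2^{-m_j-1}\big) \ \longrightarrow \ 0
\end{equation*}
as all $m_j \to \infty$, using the uniform continuity of $f$ on the compact cube $\IId$. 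This yields the representation \eqref{eq:FaberRepresentation}.

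Finally, for the basis assertion I would establish uniqueness of the coefficients: the univariate system $\{\varphi_{k,s}\}$ is a Schauder basis of $C(\II)$ with coefficient functionals $\lambda_{k,s}$, and the uniform boundedness of the partial-sum projections established above transfers this property to the $d$-fold injective tensor product $C(\IId)$, so the coefficients of any convergent expansion must coincide with $\lambda_{\bk,\bs}(f)$. I expect the only delicate step to be the uniform convergence of the rectangular partial sums: the bound $\|P_{\bm}\| \le 1$ is immediate, but making the telescoped error vanish uniformly relies on the equicontinuity of the one-dimensional sections of $f$ in each variable, i.e. precisely on the uniform continuity of $f$ on $\IId$, which is what keeps the $d$ error terms from accumulating.
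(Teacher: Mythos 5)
Your proposal is correct in substance, but it is worth noting that the paper does not prove this lemma at all: it simply cites the literature (Triebel's Theorem 3.10 for $d=2$ and D\~ung's papers for general $d$ and for B-spline generalizations). So you have supplied a self-contained argument where the paper delegates. Your route is the classical one and it works: the partial sums $P_m f=\sum_{k=-1}^m q_k(f)$ are indeed the piecewise linear interpolants at the dyadic nodes $2^{-m-1}s$ (the level-$k$ correction $q_k(f)$ vanishes at the coarser nodes and restores the values at the new midpoints, so induction gives the interpolatory identity), whence $\|P_m\|\le 1$ and $\|f-P_mf\|_{C(\II)}\le\omega(f,2^{-m-1})$; the identity $P_{\bm}=P_{m_1}\cdots P_{m_d}$ follows from $\lambda_{\bk,\bs}(f)=\prod_i\lambda_{k_i,s_i}(f)$ and the product structure of $\varphi_{\bk,\bs}$; and the telescoping bound $\|f-P_{\bm}f\|_\infty\le\sum_{j=1}^d\omega\big(f,2^{-m_j-1}\big)$ is valid because each leading factor is a contraction and $I-P_{m_j}$ acts as the univariate interpolation error on sections. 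This gives uniform convergence of the rectangular (Pringsheim) partial sums, which is the correct reading of the unordered sum in the lemma, and as a bonus it yields a quantitative modulus-of-continuity error estimate that the bare citation does not.

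The one thin spot is the basis assertion. Uniform boundedness of the \emph{rectangular} projections $P_{\bm}$ does not by itself make the tensorized system a Schauder basis: a basis requires a linear enumeration for which \emph{all} initial partial sums are uniformly bounded, including sums that cut inside a block increment $P_{\bm'}-P_{\bm}$. For the univariate Faber system this is fine (a partial sum within level $m$ is still a piecewise linear interpolant of $f$ on a coarser, non-uniform mesh, since the level-$m$ hats have disjoint supports, so the natural ordering is monotone), and then the statement you are implicitly invoking --- that the tensor product of Schauder bases, in the square ordering, is a basis of the injective tensor product $C(\IId)=C(\II)\otimes_\varepsilon\cdots\otimes_\varepsilon C(\II)$ --- is the Gelbaum--Gil de Lamadrid theorem, which should be named and cited (or replaced by a direct check of uniform boundedness of the intermediate projections in your chosen enumeration). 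With that step made explicit, your argument is complete; uniqueness of coefficients then follows since the functionals $\lambda_{\bk,\bs}$ are biorthogonal to the $\varphi_{\bk,\bs}$.
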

 The decomposition \eqref{eq:FaberRepresentation} when $d=2$ and an extension for function spaces with mixed smoothness  was obtained  independently in  \cite[Theorem 3.10]{Tri10B} and in \cite[Section 4]{Dung11a}. A generalization for the case $d\ge 2$ and also to B-spline interpolation and quasi-interpolation representation was established  in \cite{Dung11a,Dung16}.

When $f \in \Uas$, $\lambda_{\bk,\bs}(f)=0$ if $k_j=-1$ for some $j\in \{1,\ldots,d\}$, hence we can write 
\begin{equation*}
f \ = \ \sum_{\bk \in \NNd_0} q_\bk(f)   
\end{equation*}
with unconditional convergence in $C(\IId)$, see \cite[Theorem 3.13]{Tri10B}.
In this case it holds the following estimate
	\begin{equation}\label{eq:lambda-estimate}
	\begin{aligned}
|\lambda_{\bk,\bs}(f)| &=  2^{-d}\bigg|\prod_{i = 1}^d
 \Delta_{2^{-k_i-1}}^2 f\big(2^{-\bk}\bs\big) \bigg|
 \\
&= 2^{-d}\bigg|\prod_{i = 1}^d
		\Big[\Delta_{2^{-k_i-1}} f\big(2^{-\bk}\bs+2^{-k_i-1}\bee^i\big)-\Delta_{2^{-k_i-1}} f\big(2^{-\bk}\bs \big)\Big] \bigg|
		\leq 2^{ -\alpha d}2^{- \alpha|\bk|_1},
	\end{aligned}
\end{equation}
for $\bk\in \NNd_0, \ \bs\in Z(\bk)$. Here
$\{\bee^i\}_{i=1}^d$ is the standard basis of $\RR^d$.

For $f \in \mathring{C}(\IId)$,  we define  the  truncation of  Faber series  ${R}_m(f)$ by
\begin{equation} \label{R_m^d}
{R}_m(f) 
:= \ 
\sum_{\bk\in \NN_0^d,|\bk|_1 \leq m} q_\bk(f)
\ = \
\sum_{\bk\in \NN_0^d,|\bk|_1 \leq m} \ \sum_{s \in Z(\bk)} \lambda_{\bk,\bs}(f)\varphi_{\bk,\bs}.
\end{equation}
 The  function  ${R}_m(f)$ belongs to $ \mathring{C}(\IId)$ and is completely determined by sampled values  of $f$ at the points in the Smolyak grid
\[
G^d(m):= \big\{\bxi_{\bk,\bs} = 2^{-\bk-\bone}\bs:\, |\bk|_1 = m,\, \bs \in Z_*(\bk)  \big\},
\]
where $\bone=(1,\ldots,1)\in \NN^d$
and
 $$
 Z_*(\bk):= {\mathlarger{\mathlarger{\mathlarger{\mathlarger{\times}}}}}_{j=1}^d Z_*(k_j).
 $$ 
Moreover, ${R}_m(f)$ interpolates $f$ at the points 
$\bxi \in G^d(m)$.
\begin{equation*} \label{R^d_m=f}
{R}_m(f)(\bxi ) 
\ = \
f(\bxi), \quad 
\bxi  \in G^d(m).
\end{equation*}
Thus, the truncation of the Faber series ${R}_m(f)$ can be seen as a formula of interpolation sampling recovery on the grids  $G^d(m)$ for  $f \in  \mathring{C}(\IId)$.   

Notice that the Smolyak grids $G^d(m)$   are very sparse. The number of knots in $G^d(m)$ is smaller than $\frac{2^d}{(d-1)!}2^m m^{d-1}$ and is much smaller than $2^{dm}$, the number of knots in  corresponding standard full grids. However, for periodic functions having mixed smoothness,  they give the same error of the sampling recovery on the standard full grids.  See \cite[Chapter 5]{DTU18B} for details.

The following lemma gives a $d$-dependent estimate of the  error of approximation by the sparse-grid interpolation operators $R_m(f)$ of functions having mixed smoothness from $ \Uas$.

\begin{lemma} \label{thm-DT20}
	Let  $d \ge 2$, $m\in \NN$, and $0 < \alpha \le 1$. Then we have
\begin{equation*}  
		\begin{aligned}
			\sup_{f \in \Uas} \|f - {R}_m(f)\|_\infty
			&\le  
			2^{-\alpha} B^{d}\, 2^{- \alpha m} \, \binom{m+d}{d-1},\qquad B=(2^\alpha-1)^{-1}.
		\end{aligned}
	\end{equation*}
\end{lemma}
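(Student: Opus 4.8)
The plan is to bound the tail of the tensorized Faber series. Since for $f\in\Uas$ the representation \eqref{eq:FaberRepresentation} converges unconditionally in $C(\IId)$ and $R_m(f)$ collects exactly the terms with $|\bk|_1\le m$ (see \eqref{R_m^d}), I would first write
\begin{equation*}
f-R_m(f)=\sum_{\bk\in\NN_0^d,\,|\bk|_1>m} q_\bk(f),
\end{equation*}
with absolute convergence in the sup-norm, so that the triangle inequality gives $\|f-R_m(f)\|_\infty\le\sum_{|\bk|_1>m}\|q_\bk(f)\|_\infty$.

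The key pointwise observation is that for a fixed $\bk$ the supports of the tensor-product hat functions $\varphi_{\bk,\bs}$, $\bs\in Z(\bk)$, have pairwise disjoint interiors and each $\varphi_{\bk,\bs}$ vanishes on the boundary of its support; since moreover $\|\varphi_{\bk,\bs}\|_\infty=1$, at every non-dyadic point at most one summand of $q_\bk(f)$ is nonzero. Hence $\|q_\bk(f)\|_\infty=\max_{\bs\in Z(\bk)}|\lambda_{\bk,\bs}(f)|$, and the coefficient bound \eqref{eq:lambda-estimate} yields $\|q_\bk(f)\|_\infty\le 2^{-\alpha d}2^{-\alpha|\bk|_1}$. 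Grouping the indices $\bk$ by the value $n=|\bk|_1$ (there are exactly $\binom{n+d-1}{d-1}$ of them) reduces the claim to the scalar estimate
\begin{equation*}
\|f-R_m(f)\|_\infty\le 2^{-\alpha d}\sum_{n=m+1}^\infty \binom{n+d-1}{d-1}\,2^{-\alpha n}.
\end{equation*}

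The heart of the argument, and the step I expect to cost the most, is to show, with $x:=2^{-\alpha}\in(0,1)$, the tail inequality
\begin{equation*}
\sum_{n=m+1}^\infty \binom{n+d-1}{d-1}x^n\le \frac{x^{m+1}}{(1-x)^d}\binom{m+d}{d-1}.
\end{equation*}
I would prove this by induction on $d$. For $d=1$ both sides equal $x^{m+1}/(1-x)$. For the inductive step, I split off the last coordinate $k_d=t$: for $0\le t\le m$ the inner sum over the remaining $d-1$ coordinates is again a tail sum to which the inductive hypothesis applies, while for $t>m$ the inner constraint is vacuous and the inner sum equals $(1-x)^{-(d-1)}$. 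Summing the geometric contribution $t>m$ gives $x^{m+1}(1-x)^{-d}$, and the hockey-stick identity $\sum_{\ell=0}^m\binom{\ell+d-1}{d-2}=\binom{m+d}{d-1}-1$ collapses the remaining double sum. A short rearrangement then shows the total is at most $(1-x)U+xU=U$, where $U$ denotes the right-hand side; here the factor $\binom{m+d}{d-1}\ge 1$ is precisely what absorbs the leftover term $x^{m+2}(1-x)^{-d}$.

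Finally I would substitute $x=2^{-\alpha}$, use $B=(2^\alpha-1)^{-1}=x/(1-x)$, and simplify $2^{-\alpha d}\,x^{m+1}(1-x)^{-d}\binom{m+d}{d-1}$ to the claimed form $2^{-\alpha}B^{d}2^{-\alpha m}\binom{m+d}{d-1}$; this last algebra is exact, so no additional loss occurs. The only genuine difficulty is the inductive tail estimate, while the support-disjointness and the reduction via \eqref{eq:lambda-estimate} are routine.
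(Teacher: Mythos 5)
Your proposal is correct and follows essentially the same route as the paper: tail of the tensorized Faber series, disjoint supports of the $\varphi_{\bk,\bs}$ combined with the coefficient bound \eqref{eq:lambda-estimate}, grouping by $|\bk|_1$, and then the binomial tail-sum estimate, with the final algebra ($2^{-\alpha d}(1-2^{-\alpha})^{-d}=B^d$) matching exactly. The only difference is that where the paper cites \cite[Lemma 2.2]{DuTh20} for the inequality $\sum_{s=0}^\infty\binom{m+s}{n}t^s\le(1-t)^{-n-1}\binom{m}{n}$, you give a self-contained proof of the equivalent tail form by induction on $d$, and your induction is sound: the base case, the split over the last coordinate, the hockey-stick identity, and the absorption of the leftover term via $\binom{m+d}{d-1}\ge 1$ all check out.
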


\begin{proof}
	For every  $f \in \Uas$ and $\bk \in \NNd_0$, as  the functions $\varphi_{\bk,\bs}$, $\bs\in Z(\bk)$, have disjoint supports, by \eqref{eq:lambda-estimate} we have
	\begin{equation*}  
	\begin{aligned}
		\|f - {R}_m(f)\|_\infty
		&\le  
		\sum_{\bk \in \NNd_0: \, |\bk|_1 > m}\Bigg\|\sum_{\bs\in Z(\bk)} \lambda_{\bk,\bs}(f)\varphi_{\bk,\bs} \Bigg\|_\infty
		\le  
		\sum_{\bk \in \NNd_0: \, |\bk|_1 > m}
		2^{ -\alpha d}2^{- \alpha|\bk|_1}
		\\[1.5ex]
		&=  
		2^{ -\alpha d} \sum_{\ell=m+1}^\infty
		\binom{\ell+d-1}{d-1}\, 2^{- \alpha \ell}  =  
		2^{ -\alpha d} \sum_{s=0}^\infty
		\binom{m+s+d}{d-1}\, 2^{- \alpha (s+m+1)}  \\[1.5ex]
		&=  
		2^{ -\alpha d}  2^{- \alpha (m+1)}
		\sum_{s=0}^\infty
		\binom{m+s+d}{d-1}\, 2^{- \alpha s}.
	\end{aligned}
\end{equation*}
Using
\begin{equation*}\label{eq-DT}
\sum_{s=0}^\infty\binom{m+s}{n}t^s \leq (1-t)^{-n-1}\binom{m}{n},\qquad t\in(0,1),
\end{equation*} 
see \cite[Lemma 2.2]{DuTh20}, we finally obtain
\begin{equation} \nonumber
	\begin{aligned}
		\|f - {R}_m(f)\|_\infty
		&\le 
		2^{ - \alpha  d}  2^{- \alpha (m+1)} (1-2^{-\alpha})^{-d} \binom{m+d}{d-1}
		= 2^{-\alpha} B^{d}\, 2^{- \alpha m} \, \binom{m+d}{d-1}.
	\end{aligned}
\end{equation}
\hfill	
\end{proof}

\subsection{Approximation by sets of finite cardinality}

In this subsection, we explicitly construct a set of finite cardinality for approximation of $f \in \Uas$ and give an estimate of the approximation error as well as the cardinality of this  set.

Again, we start with the univariate case. For  $f \in \Uasone$ we explicitly construct the function $S_f \in \Uasone$ by
\begin{equation} \label{approx:S_f}
S_f := \ \sum_{s \in Z_*(m)} 2^{-\alpha (m+1)}l_s(f) \varphi^*_{m,s},
\end{equation}
where we put $l_0(f)=0$  and assign the values $ S_f(2^{-m-1}s) = 2^{-\alpha (m+1)}l_s (f)$ from left to right closest to $f(2^{-m-1}s)$ for $s = 1,\ldots, 2^{m+1}-1$. If there are two possible choices for $l_s(f)$ we choose $l_s(f)$ 
that is closest to the already determined $l_{s-1}(f)$.
We define 
\begin{equation*} \label{Ss^{alpha}(m)}
\Ss^{\alpha}(m) := \big\{S_f: f\in \Uasone \big\}.
\end{equation*}
\begin{lemma}\label{lem:pattern,d=1}
	Let $0<\alpha \leq 1$, $m \in \NN_0$. 
	Then it holds  $| \Ss^{\alpha}(m)|\leq 3^{2^{m+1}}$  and for  every $f \in \Uasone$ we have
	\[
	\|{R}_m(f) - S_f\|_{{L_\infty(\II)}} \leq 2^{-\alpha (m+1) -1}.
	\]  
\end{lemma}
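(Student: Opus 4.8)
The plan is to treat the two assertions of Lemma~\ref{lem:pattern,d=1} separately: the error bound first, then the cardinality bound. For the error bound I would start from the interpolatory representation \eqref{R_m=Q_m}, which expresses ${R}_m(f) = \sum_{s \in Z_*(m)} f(2^{-m-1}s)\varphi^*_{m,s}$, and subtract the definition \eqref{approx:S_f} of $S_f$. The difference ${R}_m(f) - S_f = \sum_{s \in Z_*(m)} \big(f(2^{-m-1}s) - 2^{-\alpha(m+1)}l_s(f)\big)\varphi^*_{m,s}$ is again a continuous piecewise linear function vanishing on $\partial\II$, with breakpoints exactly at the dyadic nodes $2^{-m-1}s$. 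Since a continuous function that is affine on each subinterval attains its sup-norm at a node, and $\varphi^*_{m,s}(2^{-m-1}s') = \delta_{s,s'}$, it follows that $\|{R}_m(f) - S_f\|_{L_\infty(\II)} = \max_s |f(2^{-m-1}s) - 2^{-\alpha(m+1)}l_s(f)|$. By construction $2^{-\alpha(m+1)}l_s(f)$ is the nearest integer multiple of $2^{-\alpha(m+1)}$ to $f(2^{-m-1}s)$, so each nodal error is at most half the mesh, i.e. $2^{-\alpha(m+1)-1}$, which gives the claimed bound.

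For the cardinality bound I would observe that $S_f$ is completely determined by the integer vector $(l_1(f),\ldots,l_{2^{m+1}-1}(f))$ together with the fixed convention $l_0(f)=0$, and the point is to control the consecutive increments $l_s(f)-l_{s-1}(f)$. Writing $a_s := 2^{\alpha(m+1)}f(2^{-m-1}s)$, so that $l_s(f)$ is the nearest integer to $a_s$ (with the stated tie-break), the seminorm bound $|f|_{H^{\alpha}_{\infty}(\{1\})}\le 1$ applied to the two neighbouring nodes of mesh size $2^{-m-1}$ gives $|f(2^{-m-1}s)-f(2^{-m-1}(s-1))|\le (2^{-m-1})^\alpha$, hence $|a_s - a_{s-1}|\le 1$. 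I would then show that rounding two reals at distance at most $1$ to their nearest integers yields integers differing by at most $1$, so that each increment $l_s(f)-l_{s-1}(f)\in\{-1,0,1\}$. As there are $2^{m+1}-1$ increments and $l_0(f)$ is fixed, at most $3^{2^{m+1}-1}\le 3^{2^{m+1}}$ admissible vectors occur, whence $|\Ss^{\alpha}(m)|\le 3^{2^{m+1}}$.

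The main obstacle is precisely this final rounding estimate. The inequality $|a_s - a_{s-1}|\le 1$ by itself only forces $|l_s(f)-l_{s-1}(f)|\le 2$ in the borderline situation where one of $a_s,a_{s-1}$ is a half-integer (a genuine rounding tie), which would degrade the bound to $5^{2^{m+1}}$. This is exactly where the tie-breaking rule — choose $l_s(f)$ closest to the already determined $l_{s-1}(f)$ — is used: relying on $|a_{s-1}-l_{s-1}(f)|\le \tfrac12$ and $|a_s-a_{s-1}|\le 1$ to locate the few possible half-integer positions of $a_s$ relative to $l_{s-1}(f)$, I would check case by case that the rule always selects a candidate within distance $1$ of $l_{s-1}(f)$, so a jump of size $2$ never occurs. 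Verifying this small case analysis carefully is the only delicate point; everything else is routine.
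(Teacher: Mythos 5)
Your proposal is correct and follows essentially the same route as the paper's proof: the nodal rounding errors of at most half the step $2^{-\alpha(m+1)}$ give the $L_\infty$ bound (the paper bounds via $\sum_{s}\varphi^*_{m,s}\le 1$ where you use exact nodal evaluation of the piecewise linear difference, an immaterial difference), and the H\"older estimate between consecutive nodes combined with the tie-breaking rule forces $|l_s(f)-l_{s-1}(f)|\le 1$, whence the count $3^{2^{m+1}}$. Your normalized rounding case analysis with $a_s:=2^{\alpha(m+1)}f(2^{-m-1}s)$ is just a rephrasing of the paper's observation that $|l_s(f)-l_{s-1}(f)|=2$ would contradict choosing $l_s(f)$ closest to the already determined $l_{s-1}(f)$.
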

\begin{proof} In this proof we develop a technique used in \cite{DDF.19}.   
	For  every  $f \in \Uasone$, from the construction of $S_f$ we have 
	\begin{equation*}
	S_f  = \ \sum_{s \in Z_*(m)} S_f(2^{-m-1}s) \varphi^*_{m,s}
	\end{equation*}
	and
	\begin{equation} \label{S-f}
	|S_f(2^{-m-1}s) - f(2^{-m-1}s)|  \le 2^{- \alpha (m+1) - 1}, \ \  s = 0,\ldots, 2^{m+1}.
	\end{equation}	
	From this, \eqref{R_m=Q_m} and the inequality $\sum_{s \in Z_*(m)} \varphi^*_{m,s}(x) \le 1$, we deduce that for every $x \in \II$,
	\begin{equation} \nonumber
	\begin{split}
	|{R}_{m}(f)(x) - S_f(x)|
	\ &\le \
	\sum_{s \in Z_*(m)} |f(2^{-m-1}s) - S_f(2^{-m-1}s)|\varphi^*_{m,s}(x)
	\\[1ex]
	\ &\le \
	2^{- \alpha (m+1) - 1}\sum_{s \in Z_*(m)} \varphi^*_{m,s}(x)
	\ \le \ 2^{- \alpha (m+1) - 1}.
	\end{split}
	\end{equation} 	
	We have by \eqref{S-f} and the inclusion  $f \in \Uasone$,
	\begin{equation*}
	\begin{aligned}
	|l_s(f) - l_{s-1}(f)|2^{- \alpha (m+1)}
	&=
	|S_f(2^{-m-1}s) - S_f(2^{-m-1}(s-1))| 
	\\[1.5ex]
	& \le
	|S_f(2^{-m-1}s) - f(2^{-m-1}s)|  +  |f(2^{-m-1}s) - f(2^{-m-1}(s-1))|  
	\\[1.5ex]
	& 
	\quad  +  |f(2^{-m-1}(s-1)) -  S(2^{-m-1}(s-1))|
	\le 2^{- \alpha (m+1) + 1}.
	\end{aligned}
	\end{equation*}
	Hence, $|l_s (f)- l_{s-1}(f)| \le 2$. But the case $|l_s (f)- l_{s-1}(f)| = 2$ is not possible since it would imply that $l_s(f)$ is not closest to $l_{s-1}(f)$. 
	This means that $|l_s(f)- l_{s-1}(f)| \le 1$.  Taking account this inequality and $l_0(f) = 0$, we can see that  $|\Ss^\alpha(m)| \le 3^{2^{m+1}}$.
	\hfill
\end{proof}

In the following, we make use the abbreviations:
$\bx_j := (x_1,\ldots,x_j) \in \RR^j$;  $\bar{\bx}_j := (x_{j+1},\ldots,x_d) \in \RR^{d-j}$ with the convention $\bx_0 := 0$ for 
$\bx \in \RRd$ and $j = 0,1,\ldots, d-1$.
When $j=1$ we denote $x_1$ instead of $\bx_1$. 

We now construct a set of finite cardinality for approximation of $f \in \Uas$. Our strategy is to apply the above result to explicitly  construct a set of finite cardinality for approximation of $R_m(f)$, and show that this set approximates $f$ as well as $R_m(f)$. To do this we need a special representation of ${R}_m(f)$ in terms of the tensor product of $\varphi_{\bar{\bk}_1,\bar{\bs}_1}(\bar{\bx}_1)$ and ${R}_{m-|\bar{\bk}_1|_1}$ of a function in  $\Uasone$ of variable $x_1$.
\begin{lemma}\label{lem:representation}
	Let  $d \ge 2$, $0<\alpha \leq 1$,  $m>1$ and $f \in \Uas$.  It holds the representation
	\begin{equation} \label{R_m=}
		{R}_m(f) (\bx) = \sum_{|\bar{\bk}_1|_1\leq m } \ \sum_{\bar{\bs}_1\in Z(\bar{\bk}_1)}  
		2^{-\alpha (|\bar{\bk}_1|_1 + d - 1)} \varphi_{\bar{\bk}_1,\bar{\bs}_1}(\bar{\bx}_1){R}_{m-|\bar{\bk}_1|_1}\big(K_{\bar{\bk}_1,\bar{\bs}_1}(f)(x_1)\big), 
	\end{equation}
	where the univariate function $K_{\bar{\bk}_1,\bar{\bs}_1}(f)$ belongs  to  $\Uasone$ and is defined by
	\begin{equation} \label{f_k^j,s^j}
		K_{\bar{\bk}_1,\bar{\bs}_1}(f)(x_1):=\prod_{j =2}^d \bigg(- \frac {1}{2} 2^{\alpha(k_j+1)} \Delta_{2^{-k_j-1}}^2 f\big(x_1,2^{-\bar{\bk}_1}\bar{\bs}_1\big)\bigg).
	\end{equation}	
\end{lemma}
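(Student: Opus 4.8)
The plan is to peel off the first variable $x_1$ from the remaining variables $\bar{\bx}_1=(x_2,\ldots,x_d)$ in the defining sum \eqref{R_m^d} of $R_m(f)$ and to recognise the part depending on $x_1$ as a univariate truncated Faber series. First I would split each multi-index as $\bk=(k_1,\bar{\bk}_1)$ and $\bs=(s_1,\bar{\bs}_1)$ and use the tensor-product structure $\varphi_{\bk,\bs}(\bx)=\varphi_{k_1,s_1}(x_1)\,\varphi_{\bar{\bk}_1,\bar{\bs}_1}(\bar{\bx}_1)$ together with the factorisation of the functional. Since the univariate functionals $\lambda_{k_i,s_i}$ act on pairwise disjoint variables they commute, so applying those with $i\geq 2$ first produces a function of $x_1$ alone, namely $h_{\bar{\bk}_1,\bar{\bs}_1}(x_1):=\prod_{j=2}^d\lambda_{k_j,s_j}(f)=\prod_{j=2}^d\big(-\tfrac12\Delta^2_{2^{-k_j-1}}\big)f(x_1,2^{-\bar{\bk}_1}\bar{\bs}_1)$, and then $\lambda_{\bk,\bs}(f)=\lambda_{k_1,s_1}\big(h_{\bar{\bk}_1,\bar{\bs}_1}\big)$. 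Rewriting the constraint $|\bk|_1\leq m$ as $0\leq k_1\leq m-|\bar{\bk}_1|_1$ (nonempty exactly when $|\bar{\bk}_1|_1\leq m$) and summing first over $k_1,s_1$, the inner double sum is precisely the univariate truncation $R_{m-|\bar{\bk}_1|_1}\big(h_{\bar{\bk}_1,\bar{\bs}_1}\big)(x_1)$ by the univariate definition $R_M(\cdot)=\sum_{k=0}^M q_k(\cdot)$.

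Next I would match $h_{\bar{\bk}_1,\bar{\bs}_1}$ with $K_{\bar{\bk}_1,\bar{\bs}_1}(f)$ up to a scalar. Comparing \eqref{f_k^j,s^j} with the expression for $h$, the only difference is the factor $\prod_{j=2}^d 2^{\alpha(k_j+1)}=2^{\alpha(|\bar{\bk}_1|_1+d-1)}$ carried by $K$, so $h_{\bar{\bk}_1,\bar{\bs}_1}=2^{-\alpha(|\bar{\bk}_1|_1+d-1)}K_{\bar{\bk}_1,\bar{\bs}_1}(f)$. Pulling this scalar out of the linear operator $R_{m-|\bar{\bk}_1|_1}$ and reinserting $\varphi_{\bar{\bk}_1,\bar{\bs}_1}(\bar{\bx}_1)$ produces exactly \eqref{R_m=}. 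This step is pure algebraic bookkeeping once the functional factorisation is in place.

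The main work, and the main obstacle, is the assertion $K_{\bar{\bk}_1,\bar{\bs}_1}(f)\in\Uasone$. Vanishing at $x_1\in\{0,1\}$ is immediate: $f\in\mathring{C}(\IId)$ forces $f(0,\cdot)=f(1,\cdot)=0$, and applying the difference operators in $x_2,\ldots,x_d$ to the zero slice gives $0$. For the norm I would bound both $\|K\|_{C(\II)}$ and the Hölder seminorm $\sup_{h>0}h^{-\alpha}\|\Delta_h K\|_\infty$ by reducing mixed second differences to mixed first differences, exactly as in the derivation of \eqref{eq:lambda-estimate}. Writing $\Delta^2_{2^{-k_j-1}}g=\Delta_{2^{-k_j-1}}g(\cdot+2^{-k_j-1}\bee^j)-\Delta_{2^{-k_j-1}}g(\cdot)$ for each $j=2,\ldots,d$ and expanding the product into $2^{d-1}$ signed terms, every term is a mixed first difference $\Delta_{\bh,u}f$ with $u=\{2,\ldots,d\}$ and $h_j=2^{-k_j-1}$, evaluated at a point of $\IId$, hence bounded by $\prod_{j=2}^d 2^{-\alpha(k_j+1)}=2^{-\alpha(|\bar{\bk}_1|_1+d-1)}$ through $|f|_{H^\alpha_\infty(u)}\leq 1$. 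Multiplying by the prefactor $2^{-(d-1)}2^{\alpha(|\bar{\bk}_1|_1+d-1)}$ and counting the $2^{d-1}$ terms yields $\|K\|_{C(\II)}\leq 1$.

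The Hölder seminorm is handled the same way after commuting $\Delta_h$ in $x_1$ through the second-difference operators in the other variables (they act on disjoint coordinates): now the expansion gives $2^{d-1}$ mixed first differences $\Delta_{\bh',u'}f$ with $u'=\{1,\ldots,d\}$, $h'_1=h$, $h'_j=2^{-k_j-1}$, each bounded by $h^\alpha\prod_{j=2}^d 2^{-\alpha(k_j+1)}$, so that $\|\Delta_h K\|_\infty\leq h^\alpha$ and the seminorm is $\leq 1$. The delicate point throughout is that the normalisation $2^{\alpha(k_j+1)}$ in the definition of $K$ is engineered precisely so that the prefactor $2^{\alpha(|\bar{\bk}_1|_1+d-1)}$, the count of $2^{d-1}$ expansion terms, and the factor $2^{-(d-1)}$ all cancel to give the sharp constant $1$ in both estimates; keeping this bookkeeping exact is the one place where care is genuinely required.
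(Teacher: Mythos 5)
Your proposal is correct and follows essentially the same route as the paper's proof: splitting $\bk=(k_1,\bar{\bk}_1)$, recognizing the inner sum over $k_1$ as the univariate truncation ${R}_{m-|\bar{\bk}_1|_1}$, factoring out the normalization $2^{-\alpha(|\bar{\bk}_1|_1+d-1)}$, and verifying $K_{\bar{\bk}_1,\bar{\bs}_1}(f)\in\Uasone$ by reducing the second differences to mixed first differences controlled by the mixed H\"older seminorm of $f$ (your expansion into $2^{d-1}$ signed terms is exactly the paper's per-factor bound $\tfrac12\cdot 2^{\alpha(k_j+1)}\cdot 2\cdot 2^{-\alpha(k_j+1)}=1$, organized differently). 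If anything, your membership check is slightly more complete than the paper's, which records only the estimate $|\Delta_{h_1}K_{\bar{\bk}_1,\bar{\bs}_1}(f)(x_1)|\le h_1^\alpha$, whereas you also verify the sup-norm bound and the vanishing at $x_1\in\{0,1\}$.
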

\begin{proof}
	We have that
	\begin{equation*}
		\begin{aligned}
			{R}_m(f)(\bx)
			&=
			\sum_{k_2=0}^m \sum_{k_3=0}^{m-k_2} \ldots \sum_{k_{d}=0}^{m-k_2-\ldots-k_{d-1}}\Bigg[\sum_{k_1=0}^{m-|\bar{\bk}_1|_1} q_{k_1}\Bigg( \prod_{j=2}^{d}q_{k_j}(f)\Bigg)(\bx)\Bigg]
			\\
			&=
			\sum_{k_2=0}^m \sum_{k_3=0}^{m-k_2} \ldots \sum_{k_{d}=0}^{m-k_2-\ldots-k_{d-1}} {R}_{m-|\bar{\bk}_1|_1}\Bigg( \prod_{j=2}^{d}q_{k_j}(f)\Bigg)(\bx)
			\\
			&=
			\sum_{|\bar{\bk}_1|_1\leq m} {R}_{m-|\bar{\bk}_1|_1}\Bigg(\sum_{\bar{\bs}_1 \in Z(\bar{\bk}_1)}  \prod_{j=2}^{d}\bigg(- \frac {1}{2} \Delta_{2^{-k_j-1}}^2 f\big(x_1,2^{-\bar{\bk}_1}\bar{\bs}_1\big)\bigg)\varphi_{\bar{\bk}_1,\bar{\bs}_1}(\bar{\bx}_1)\Bigg).
		\end{aligned}
	\end{equation*}
	Here ${R}_{m-|\bar{\bk}_1|_1}$ applies to the function of variable $x_1$.	Hence, we can write
	\begin{equation*}
		{R}_m(f) (\bx) = \sum_{|\bar{\bk}_1|_1\leq m } \sum_{\bar{\bs}_1\in Z(\bar{\bk}_1)}  \Bigg(\prod_{j=2}^{d}2^{-\alpha(k_j+1)} \Bigg)\varphi_{\bar{\bk}_1,\bar{\bs}_1}(\bar{\bx}_1){R}_{m-|\bar{\bk}_1|_1}\big(K_{\bar{\bk}_1,\bar{\bs}_1}(f)(x_1)\big).
	\end{equation*}	
	Thus, \eqref{R_m=} is proven.  For $x_1\in \II$ and $x_1 + h_1\in \II$, is holds the estimates
	\[
	|\Delta_{h_1} K_{\bar{\bk}_1,\bar{\bs}_1}(f)(x_1)| \leq \Bigg( \prod_{j=2}^{d}  \frac{1}{2}\cdot 2^{\alpha(k_j+1)} \cdot 2\cdot 2^{-\alpha(k_j+1)} \Bigg) |h_1|^\alpha \leq h_1^\alpha
	\]
	which implies that $K_{\bar{\bk}_1,\bar{\bs}_1}(f) \in \Uasone$.
	\hfill
\end{proof}

From the above special representation of $R_m(f)$ and   Lemmata \ref{thm-DT20} and  \ref{lem:pattern,d=1} we derive the following result.
\begin{lemma}\label{lem:pattern}
	Let   $m>1$, $d \ge 2$ and $0<\alpha \leq 1$.  For $f \in \Uas$, let the function ${S}_m(f)$ be defined by
\begin{equation} \label{S_f}
{S}_m(f)(\bx):= \sum_{|\bar{\bk}_1|\leq m }  2^{-\alpha (|\bar{\bk}_1|_1 + d - 1)}
\ \sum_{\bar{\bs}_1\in Z(\bar{\bk}_1)} 
\varphi_{\bar{\bk}_1,\bar{\bs}_1}(\bar{\bx}_1) S_{K_{\bar{\bk}_1,\bar{\bs}_1}(f)}(x_1),
\end{equation}
 where $S_{K_{\bar{\bk}_1,\bar{\bs}_1}(f)}\in \Ss^\alpha(m - |\bar{\bk}_1|_1)$  is  as in \eqref{approx:S_f} for the function $K_{\bar{\bk}_1,\bar{\bs}_1}(f)$. Then  it holds the inequality 	
\begin{equation} \label{f-S<}
\|f-{S}_m(f)\|_\infty \leq   B^{d}  2^{-\alpha m} \binom{m+d}{d-1}.
\end{equation}
 Moreover, for the set
\begin{equation*} \label{Ss^{d,alpha}(m)}
\Ss^{\alpha,d}(m):=\big\{{S}_m(f): \ f\in \Uas \big\},
\end{equation*}
we have 
 $N_{d}(m):=|\Ss^{\alpha,d}(m)| \le 3^{2^{m+1}\binom{m+d-1}{d-1}}$.
\end{lemma}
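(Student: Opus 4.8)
The plan is to prove the two assertions separately, treating $S_m(f)$ as a quantized version of the special representation of $R_m(f)$ from Lemma \ref{lem:representation}. For the error bound \eqref{f-S<} I would insert $R_m(f)$ as an intermediary and apply the triangle inequality, $\|f-S_m(f)\|_\infty \le \|f-R_m(f)\|_\infty + \|R_m(f)-S_m(f)\|_\infty$. The first summand is controlled by Lemma \ref{thm-DT20}, which gives $\|f-R_m(f)\|_\infty \le 2^{-\alpha}B^d 2^{-\alpha m}\binom{m+d}{d-1}$, so the whole task reduces to estimating the quantization error $\|R_m(f)-S_m(f)\|_\infty$.

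For that term I would subtract the representation \eqref{R_m=} from the definition \eqref{S_f} term by term, which yields a sum over $|\bar{\bk}_1|_1\le m$ and $\bar{\bs}_1\in Z(\bar{\bk}_1)$ of $2^{-\alpha(|\bar{\bk}_1|_1+d-1)}\varphi_{\bar{\bk}_1,\bar{\bs}_1}(\bar{\bx}_1)$ times the univariate difference $\bigl[R_{m-|\bar{\bk}_1|_1}(K_{\bar{\bk}_1,\bar{\bs}_1}(f)) - S_{K_{\bar{\bk}_1,\bar{\bs}_1}(f)}\bigr](x_1)$. Since Lemma \ref{lem:representation} guarantees $K_{\bar{\bk}_1,\bar{\bs}_1}(f)\in\Uasone$, the univariate Lemma \ref{lem:pattern,d=1} applied at level $m-|\bar{\bk}_1|_1$ bounds this difference by $2^{-\alpha(m-|\bar{\bk}_1|_1+1)-1}$. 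For fixed $\bar{\bk}_1$, the functions $\varphi_{\bar{\bk}_1,\bar{\bs}_1}$ have pairwise disjoint supports in $\bar{\bx}_1$ with $\sum_{\bar{\bs}_1}\varphi_{\bar{\bk}_1,\bar{\bs}_1}\le 1$, so at each point only one $\bar{\bs}_1$ contributes and the inner block has sup-norm at most $2^{-\alpha(m-|\bar{\bk}_1|_1+1)-1}$. The key observation is the exponent cancellation $2^{-\alpha(|\bar{\bk}_1|_1+d-1)}\cdot 2^{-\alpha(m-|\bar{\bk}_1|_1+1)-1}=2^{-\alpha(m+d)-1}$, which is \emph{independent} of $\bar{\bk}_1$; summing over the $\binom{m+d-1}{d-1}$ admissible $\bar{\bk}_1$ then gives $\|R_m(f)-S_m(f)\|_\infty \le 2^{-\alpha(m+d)-1}\binom{m+d-1}{d-1}$.

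Combining the two pieces, \eqref{f-S<} follows once one checks the elementary inequality that the leftover term $2^{-\alpha(m+d)-1}\binom{m+d-1}{d-1}$ is absorbed, i.e. that it is at most $(1-2^{-\alpha})B^d 2^{-\alpha m}\binom{m+d}{d-1}$. Using $(1-2^{-\alpha})B^d = 2^{-\alpha}B^{d-1}$ and $\binom{m+d-1}{d-1}/\binom{m+d}{d-1}=(m+1)/(m+d)\le 1$, this reduces to $2^{-\alpha(d-1)-1}\,(m+1)/(m+d)\le B^{d-1}$, which holds trivially since the left side is below $1/2$ while $B\ge 1$ (as $0<2^\alpha-1\le 1$) forces $B^{d-1}\ge 1$; here $d\ge 2$ is what makes $B^{d-1}$ nontrivial. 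I expect this bookkeeping—getting a \emph{clean} bound $B^d 2^{-\alpha m}\binom{m+d}{d-1}$ rather than one carrying an extra additive term—to be the only genuinely delicate point, and it is precisely the $\bar{\bk}_1$-independent cancellation above that makes it work.

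For the cardinality bound, I would argue that $S_m(f)$ is completely determined by the finite family of univariate patterns $S_{K_{\bar{\bk}_1,\bar{\bs}_1}(f)}$, each lying in $\Ss^\alpha(m-|\bar{\bk}_1|_1)$, whose size is at most $3^{2^{m-|\bar{\bk}_1|_1+1}}$ by Lemma \ref{lem:pattern,d=1}. Hence $N_d(m)$ is bounded by the product over all pairs $(\bar{\bk}_1,\bar{\bs}_1)$ of these counts. Since $|Z(\bar{\bk}_1)|=\prod_{j=2}^d 2^{k_j}=2^{|\bar{\bk}_1|_1}$, the per-$\bar{\bk}_1$ contribution is $\bigl(3^{2^{m-|\bar{\bk}_1|_1+1}}\bigr)^{2^{|\bar{\bk}_1|_1}}=3^{2^{m+1}}$, again via an exponent cancellation independent of $\bar{\bk}_1$. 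Multiplying over the $\binom{m+d-1}{d-1}$ choices of $\bar{\bk}_1$ then yields $N_d(m)\le 3^{\,2^{m+1}\binom{m+d-1}{d-1}}$, as claimed.
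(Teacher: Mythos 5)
Your proposal is correct and follows essentially the same route as the paper's proof: triangle inequality through $R_m(f)$, Lemma \ref{thm-DT20} for the truncation error, the representation \eqref{R_m=} combined with the univariate Lemma \ref{lem:pattern,d=1} and the disjoint supports of the $\varphi_{\bar{\bk}_1,\bar{\bs}_1}$ for the quantization error (with the same $\bar{\bk}_1$-independent cancellation $2^{-\alpha(|\bar{\bk}_1|_1+d-1)}\cdot 2^{-\alpha(m-|\bar{\bk}_1|_1+1)-1}=2^{-\alpha(m+d)-1}$), and the identical count $\bigl(3^{2^{m-|\bar{\bk}_1|_1+1}}\bigr)^{2^{|\bar{\bk}_1|_1}}=3^{2^{m+1}}$ raised to the power $\binom{m+d-1}{d-1}$ for the cardinality bound. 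The only cosmetic difference is the final absorption step: the paper first relaxes $\binom{m+d-1}{d-1}$ to $\binom{m+d}{d-1}$ and then checks $2^{-\alpha}B^{d}+2^{-1-\alpha d}\le B^{d}$, whereas you keep the ratio $(m+1)/(m+d)$ explicit — both reduce to the same trivial numerical inequality.
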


\begin{proof}  We first estimate  $N_{d}(m)$.
	Since the number of $\bar{\bk}_1$ with $|\bar{\bk}_1|_1\leq m$ is $\binom{m+d-1}{d-1}$,
and the cardinality of $\Ss^\alpha(m - |\bar{\bk}_1|_1)$ is  bounded by $ 3^{2^{m-|\bar{\bk}_1|_1 + 1}}$,
we have 
\[
 N_{d}(m) \leq \bigg( \Big(  3^{2^{m-|\bar{\bk}_1|_1 + 1}}\Big)^{2^{|\bar{\bk}_1|_1}} \bigg)^{\binom{m+d-1}{d-1}} =  3^{2^{m+1}\binom{m+d-1}{d-1}}.
\]
 Lemma \ref{thm-DT20}  gives 
	\begin{equation}\label{eq-f-Rmf}
	\|f-{R}_m(f) \|_\infty \leq 2^{-\alpha}  B^{d}  2^{-\alpha m} \binom{m+d}{d-1}.
	\end{equation}	
Next we show that 
	\begin{equation} \label{R-S}
	\|{R}_m(f) - {S}_m(f)\|_\infty \leq 2^{-1-\alpha d}  2^{-\alpha m} \binom{m+d}{d-1}.
	\end{equation}	 
Since $K_{\bar{\bk}_1,\bar{\bs}_1}(f) \in \Uasone$ by Lemma \ref{lem:representation}, applying  Lemma \ref{lem:pattern,d=1} we deduce that
	there  is $S_{K_{\bar{\bk}_1,\bar{\bs}_1}(f)} \in \Ss^\alpha\big(m - |\bar{\bk}_1|_1\big)$ such that
	\begin{equation} \label{R-S(2)}
	\big\|{R}_{m-|\bar{\bk}_1|_1}\big(K_{\bar{\bk}_1,\bar{\bs}_1}(f)\big) - S_{K_{\bar{\bk}_1,\bar{\bs}_1}(f)}  \big\|_\infty \leq 2^{-1-\alpha} 2^{-\alpha (m-|\bar{\bk}_1|_1)}.
	\end{equation}
     Hence, taking account that the supports of $\varphi_{\bar{\bk}_1,\bar{\bs}_1}$ with $\bar{\bs}_1 \in Z(\bar{\bk}_1)$ are disjoint,  by the  representation \eqref{R_m=}--\eqref{f_k^j,s^j} of $R_m(f)$ and \eqref{R-S(2)} we have that  for every $\bx \in \IId$, 
	\begin{equation*}
	\begin{aligned}
&	\big|{R}_m(f)(\bx) - {S}_m(f)(\bx)\big| 
\\
& = \Bigg | \sum_{|\bar{\bk}_1|\leq m }\sum_{\bar{\bs}_1 \in Z(\bar{\bk}_1) } 2^{-\alpha (|\bar{\bk}_1|_1 + d - 1)} 
	\varphi_{\bar{\bk}_1,\bar{\bs}_1}(\bar{\bx}_1) \Big( {R}_{m-|\bar{\bk}_1|_1}\big(K_{\bar{\bk}_1,\bar{\bs}_1}(f)(x_1)\big) - S_{K_{\bar{\bk}_1,\bar{\bs}_1}(f)} (x_1) \Big)\Bigg|
	\\
	& \leq  \sum_{|\bar{\bk}_1|\leq m } 2^{-\alpha (|\bar{\bk}_1|_1 + d - 1)}  \sup_{\bar{\bs}_1 \in Z(\bar{\bk}_1) }\Big\|{R}_{m-|\bar{\bk}_1|_1}\big(K_{\bar{\bk}_1,\bar{\bs}_1}(f) \big) - S_{K_{\bar{\bk}_1,\bar{\bs}_1}(f)}   \Big\|_\infty
	\\
	&\leq  2^{-1-\alpha} 2^{-\alpha(d-1)}  \sum_{|\bar{\bk}_1|\leq m } 2^{-\alpha|\bar{\bk}_1|_1} 2^{-\alpha(m-|\bar{\bk}_1|_1)}
	= 2^{-1-\alpha d}  2^{-\alpha m} \binom{m+d-1}{d-1}
	\end{aligned}
	\end{equation*}
This implies \eqref{R-S}.  From \eqref{eq-f-Rmf} and \eqref{R-S}, the triangle inequality and $
2^{-\alpha}B^{d} + 2^{-1-\alpha d} \leq B^{d}
$ prove \eqref{f-S<}.
	\hfill
\end{proof}

 \section{ Nonlinear approximation by parametric manifolds}\label{sec-nonlinear-manifold}
  This section  aims at constructing  nonlinear methods of parametric manifold approximation of functions $f\in \Uas$. More precisely, we construct such a nonlinear method $Q_N(f) = G^*_N((\blambda^*_N(f)))$ with mappings $\blambda^*_N$ and $G^*_N$ of the form \eqref{lambda_N,G_N}, satisfying the upper and lower estimates of approximation error \eqref{upperbnd}--\eqref{lowerbnd}. In order to do this we use   the truncation of the tensorized Faber series ${R}_n(f)$ as an  intermediate approximation.  We then represent    the difference $f- {R}_n(f)$ in a special form and approximate terms in this  representation by functions in the set of finite cardinality constructed in the previous section. 
 
 For univariate functions $f \in \mathring{C}(\II)$, let the operator ${T}_k$, $k \in \NN_0$, be defined by 
 \[
 {T}_k(f):= f - {R}_{k-1}(f)
 \]
 with the operator ${R}_k$ defined as in \eqref{R_m^d} and  the convention ${R}_{-1}:=0$. From this definition we have ${T}_0$ is the identity operator.  Notice that for $f \in \Uasone$, it holds the inequality $\|{T}_k(f)\|_{H^{\alpha}_{\infty}(\II)} \le 2$.
 For a multivariate function $f \in \mathring{C}(\IId)$, the tensor product operator  ${T}_\bk$, $\bk=(k_1,\ldots,k_d) \in \NN_0^d$, is defined by
 \[
 {T}_{\bk}(f) := \prod_{j=1}^d {T}_{k_j}(f),
 \]
 where the univariate operator ${T}_{k_j}$ is applied to the univariate function $f$ by considering $f$ as a function of variable $x_j$ with the other variables held fixed. It holds that $\|{T}_\bk(f)\|_{H^{\alpha}_{\infty}(\IId)} \le 2^d$.

For $n\in \NN$ we have
 \begin{equation*}
 \begin{aligned}
 f- {R}_n(f)
 & = \sum_{\bk\in \NN_0^d, |\bk|_1>n}q_{\bk}(f)
 = \sum_{k_1>n\atop k_j\geq 0, j=2,\ldots,d} q_{\bk}(f)  + \sum_{k_1=0}^n q_{k_1}\Bigg( \sum_{|\bar{\bk}_1|_1>n-k_1}q_{\bar{\bk}_1}(f)\Bigg)
 \\
 &= {T}_{(n+1)\bee^1}(f) + \sum_{k_1=0}^n q_{k_1}\Bigg( {T}_{(n+1-k_1)\bee^2}(f)+ \sum_{k_2=0}^{n-k_1}q_{k_2}\bigg(\sum_{|\bar{\bk}_2|_1>n-|\bk_2|_1}q_{\bar{\bk}_2}(f)\bigg)\Bigg)
 \\
 &= {T}_{(n+1)\bee^1}(f) + \sum_{k_1=0}^n q_{k_1}  {T}_{(n+1-k_1)\bee^2}(f)+ \sum_{|\bk_2|_1\leq n}q_{\bk_2}\Bigg(\sum_{|\bar{\bk}_2|_1>n-|\bk_2|_1}q_{\bar{\bk}_2}(f)\Bigg).
 \end{aligned}
 \end{equation*} 
 Continuing  in this way, we arrive at
 \begin{equation*}
 \begin{aligned}
 f-{R}_n(f)& = {T}_{(n+1)\bee^1}(f) + \sum_{k_1=0}^n q_{k_1}  {T}_{(n+1-k_1)\bee^2}(f) + \ldots 
 +  \sum_{|\bk_{d-1}|_1\leq n}q_{\bk_{d-1}}\Big({T}_{(n+1-|\bk_{d-1}|_1)\bee^d}(f)\Big)
 \\
 & = {T}_{(n+1)\bee^1}(f) + \sum_{k_1=0}^n   {T}_{(n+1-k_1)\bee^2}\big(q_{k_1}(f)\big) + \ldots 
 +  \sum_{|\bk_{d-1}|_1\leq n}{T}_{(n+1-|\bk_{d-1}|_1)\bee^d}\big(q_{\bk_{d-1}}(f)\big).
 \end{aligned}
 \end{equation*}
Putting 
 $$
F_{\bk_j}
:= 
{T}_{(n+1-|\bk_j|_1)\bee^{j+1}}\big(q_{\bk_j}(f)\big),\ \ j = 0,1,\ldots,d-1.
$$
we can write 
 \begin{equation}\label{eq:representation}
 \begin{aligned}
 f-{R}_n(f)& 
 = \sum_{j = 0}^{d-1} \ \sum_{|\bk_j|_1 \leq n}F_{\bk_j},
 \end{aligned}
 \end{equation}

Let $f\in \Uas$ be given.  We will use this special representation to explicitly construct mappings $\blambda^*_N$ and $G^*_N$ of the form \eqref{lambda_N,G_N}. To this end, caused by \eqref{eq:representation}, we will preliminarly  approximate  $T_\bk(f)$.
 
Put 
 \[
 I_{\bk,\bs}
 := 
 {\mathlarger{\mathlarger{\mathlarger{\mathlarger{\times}}}}}_{j=1}^d I_{k_j, s_j} 
 = 
 {\mathlarger{\mathlarger{\mathlarger{\mathlarger{\times}}}}}_{j=1}^d  [2^{-k_j}s_j,2^{-k_j}(s_j+1)], \ \ \bk \in \NN_0^d,\ \ \bs \in Z(\bk),
 \]
and
 $$
 {T}_{\bk,\bs}(f)(\bx):=  2^{\alpha|\bk|_1 - d}\big({T}_{\bk}(f)\chi_{I_{\bk,\bs}}\big)\big( 2^{-\bk}(\bx+\bs)\big).
 $$ 
 Since $\supp \big({T}_{\bk}(f)\chi_{I_{\bk,\bs}}\big) \subset I_{\bk,\bs}$ and $\|{T}_{\bk}(f)\chi_{I_{\bk,\bs}}\|_{\Lad}\leq 2^d$, we have that
\begin{equation} \label{T_k,s}
 	\supp \big({T}_{\bk,\bs}(f) \big)\subset \IId,\qquad  {T}_{\bk,\bs}(f) \in \Uas.
 \end{equation}
 This allows us to apply  Lemma \ref{lem:pattern} to the functions ${T}_{\bk,\bs}(f)$. Namely, according to this lemma we explicitly construct the function ${S}_m({{T}_{\bk,\bs}(f)})$ by the formula \eqref{S_f} so that we have by \eqref{f-S<}
   \begin{equation}\label{T_ks-S_m}
 \big\|{T}_{\bk,\bs}(f)-{S}_m({{T}_{\bk,\bs}(f)})\big\|_\infty 
 \leq 
 B^{d}  2^{-\alpha m} \binom{m+d}{d-1}.
 \end{equation}
Define 
  \begin{equation}\label{eq-S-Tkf}
{S}_{\bk,m}(f)(\bx):= 2^{-\alpha |\bk|_1 + d}\sum_{\bs\in Z(\bk)} {S}_m\big({{T}_{\bk,\bs}(f)}\big)\big(2^\bk\bx-\bs\big).
\end{equation}
 We then get
 \begin{equation*}
\begin{aligned}
 \big\| {T}_{\bk}(f) - {S}_{\bk,m}(f) \big\|_\infty
 & =\Bigg \| \sum_{\bs \in Z(\bk)} \Big[
{T}_{\bk}(f)\chi_{I_{\bk,\bs}}(\cdot) -2^{-\alpha |\bk|_1 + d}  {S}_m\big({{T}_{\bk,\bs}(f)}\big)\big(2^\bk\cdot-\bs\big)\Big] \Bigg\|_\infty  \\
&= 2^{-\alpha |\bk|_1 + d} \Bigg \| \sum_{\bs \in Z(\bk)} \Big[
{T}_{\bk,\bs} (f) -  {S}_m\big({{T}_{\bk,\bs}(f)}\big)\Big]\big(2^\bk \cdot-\bs\big) \Bigg\|_\infty, 
\end{aligned}
 \end{equation*}
and, consequently, by the equality $|Z(\bk)| = 2^{|\bk|_1}$ and \eqref{T_ks-S_m}, the estimate of the error of the approximation $ {T}_{\bk}(f)$ by ${S}_{\bk,m}(f)$
 \begin{equation}\label{eq:Tkf}
\begin{aligned}
\big\| {T}_{\bk}(f) - {S}_{\bk,m}(f) \big\|_\infty
&\leq   (2B)^{d} \big(2^{m} 2^{|\bk|_1}\big)^{-\alpha}\binom{m+d}{d-1}.
\end{aligned}
\end{equation}

 Let $\Ff^d(m)$ be the  finite-dimensional subspace in  $\mathring{C}(\IId)$ of the form
 \begin{equation} \label{Ff^d(m)}
 g=\sum_{\bk \in \NN_0^d, |\bk|_1 \leq m} \sum_{\bs\in Z(\bk)} \alpha_{\bk,\bs} \varphi_{\bk,\bs},\qquad \alpha_{\bk,\bs}\in \RR.
 \end{equation}
 It is easy to see that $R_m(f) \in \Ff^d(m) $ for $f \in \mathring{C}(\IId)$ and $\dim \Ff^d(m) = \sum_{\ell=0}^m 2^\ell\binom{\ell+d-1}{d-1}$.
 
	In the following, for any $N\in \NN, \ N\geq N_0$ we will  explicitly construct the maps
	$$
	\blambda^*_N: \Uas \to    \RR^N \ \ {\text and}  \ \ G^*_N: \RR^N \to \Ff^d\big(\lfloor \log N\rfloor + \lfloor \log\log N\rfloor +1 \big)
	$$ 
	and estimate the approximation error
	$\sup_{f\in \Uas}\|f-G_N^*(\blambda^*_N(f))\|_\infty$ in terms of $N$.

For $j=0, 1,\ldots, d-1$ we put
\begin{equation*}\label{eq-Mdm}
M_{d-j}(m):= N_{d-j}(m)\dim\big(\Ff^{d-j}(m)\big),
\end{equation*}
where recall, $N_{d-j}(m):=|\Ss^{\alpha,d-j}(m)|$, see Lemma \ref{lem:pattern}. We have
\begin{equation}\label{eq-Mdj}
M_{d-j}(m) \leq 3^{2^{m+1}\binom{m+d-j-1}{d-j-1}}\sum_{\ell=0}^m 2^\ell \binom{\ell+d-j-1}{d-j-1}\leq 3^{2^{m+1}\binom{m+d-j-1}{d-j-1}} 2^{m+1}\binom{m+d-j-1}{d-j-1} .
\end{equation}  
Let  
$\Gamma_j(n)$ be   the set of all triples $(\bk_j, \bs_j, s_{j+1})$ satisfying the condition 
\[|\bk_j|_1 \le n, \quad \bs_j \in Z(\bk_j), \quad s_{j+1} = 0,\ldots, 2^{n+1-|\bk_j|_1} - 1,
\]
in particular, $\Gamma_0(n)=\{s_1: 0\leq s_1\leq 2^{n+1} - 1 \}$. We have
\begin{equation}\label{eq-Gammajn}
|\Gamma_j(n)|= \sum_{|\bk_j|\leq n} 2^{|\bk_j|_1}\cdot 2^{n+1-|\bk_j|_1}=2^{n+1}\sum_{k=0}^n\binom{k+j-1}{j-1}= 2^{n+1}\binom{n+j}{j}
\end{equation}
for all $j=0,\ldots,d-1$. 

For $\eta \in [N_{d-j}(m)]:=\{1,\ldots,N_{d-j}(m)\}$ and a sequence 
$$\ba^\eta =\big(a_{\bar{\bell}_j,\bar{\bt}_j}^{\eta}\big)_{ |\bar{\bell}_j|_1\leq m, \, \bar{\bt}_j \in Z(\bar{\bell}_j)}  \in \RR^{\dim (\Ff^{d-j}(m))},$$
we put 
$$
S_{\ba^\eta}(\bar{\bx}_j):= \sum_{ |\bar{\bell}_j|_1\leq m}\sum_{\bar{\bt}_j \in Z(\bar{\bell}_j)} a_{\bar{\bell}_j,\bar{\bt}_j}^{\eta}\prod_{i=j+1}^d\varphi_{\ell_i,t_i}(x_i).
$$
If
$$
\ba: =(\ba^\eta)_{\eta = 1}^{N_{d-j}(m)} \in \RR^{M_{d-j}(m)}, 
$$
and
$$\btheta: =(\theta_{(\bk_j,\bs_j,s_{j+1})})_{(\bk_j,\bs_j,s_{j+1})\in \Gamma_j(n)} \in [N_{d-j}(m)]^{|\Gamma_j(n)|}, $$
that is, elements of $\btheta$ are numbered by indices $(\bk_j,\bs_j,s_{j+1})\in \Gamma_j(n)$, we define the maps
\begin{equation} \label{G^j_{m,n}}
G^j_{m,n}: \RR^{M_{d-j}(m)} \times [N_{d-j}(m)]^{|\Gamma_j(n)|} \ \to \ \Ff^d(m+n+1), \ j=0,\ldots,d-1,
\end{equation}
by  $$
\blambda^j := (\ba,\, \btheta) \   \mapsto \ G^j_{m,n}(\blambda^j) ,$$
where
\begin{equation*} \label{eq:form}
\begin{aligned}
G^j_{m,n}(\blambda^j) : & = \sum_{\eta=1}^{N_{d-j}(m)} \sum_{(\bk_j,\bs_j,s_{j+1}): \atop \theta_{(\bk_j,\bs_j,s_{j+1})}=\eta} 	
\frac{2^{ d-j}}{2^{\alpha (n+1+j)}} \varphi_{\bk_j,\bs_j}(\bx_j) 
S_{\ba^\eta}
\big(2^{n+1-|\bk_j|_1}x_{j+1} - s_{j+1},\bar{\bx}_{j+1}\big)
\end{aligned}
\end{equation*}
if $j=1,\ldots,d-1,$  and
\begin{equation*} \label{eq:form-j=0}
\begin{aligned}
G^0_{m,n}(\blambda^0) : & = \sum_{\eta=1}^{N_{d}(m)} \sum_{s_1: \theta_{s_1}=\eta }  
2^{-\alpha (n+1)  + d}
S_{\ba^\eta}
\big(2^{n+1}x_{1} - s_{1},\bar{\bx}_{1}\big).
\end{aligned}
\end{equation*}
 These maps are well-defined in the sense that   the functions $G^j_{m,n}( \blambda^j)$, $j=0,\ldots,d-1$,  belong to $\Ff^d(m+n+1)$. We prove this for the case $j=1,\ldots,d-1$. The case  $j=0$ can be carried out similarly. For the function 
	$\varphi_{\bar{\bell}_j,\bar{\bt}_j} (\bar{\bx}_j)$ with $|\bar{\bell}_j|_1\leq m, \bar{\bt}_j \in Z(\bar{\bell}_j)$, we have that
\begin{align*}
&\varphi_{\bar{\bell}_j,\bar{\bt}_j}
\big(2^{n+1-|\bk_j|_1}x_{j+1} - s_{j+1},\bar{\bx}_{j+1}\big)
= \bigg(\prod_{i=j+2}^d\varphi_{\ell_i,t_i}(x_i)\bigg)\varphi_{\ell_{j+1},t_{j+1}}\big(2^{n+1-|\bk_j|_1}x_{j+1} - s_{j+1}\big)
\\
&= \bigg(\prod_{i=j+2}^d\varphi_{\ell_i,t_i}(x_i)\bigg)\varphi\big(2^{\ell_{j+1}+n+2-|\bk_j|_1}x_{j+1} - 2^{\ell_{j+1}+1}s_{j+1} -2t_{j+1}\big)
\\
& =\bigg(\prod_{i=j+2}^d\varphi_{\ell_i,t_i}(x_i)\bigg)\varphi_{\ell_{j+1}+n+1-|\bk_j|_1,2^{\ell_{j+1}}s_{j+1} +t_{j+1}}(x_{j+1}) \in \Ff^{d-j}(n+m+1-|\bk_j|_1).
\end{align*}
Since $S_{\ba^\eta}$ is a linear combination of $\varphi_{\bar{\bell}_j,\bar{\bt}_j}$ with $|\bar{\bell}_j|_1\leq m, \bar{\bt}_j \in Z(\bar{\bell}_j)$,  we conclude that
\begin{align*}
S_{\ba^\eta}
\big(2^{n+1-|\bk_j|_1}x_{j+1} - s_{j+1},\bar{\bx}_{j+1}\big) \in \Ff^{d-j}(n+m+1-|\bk_j|_1)
\end{align*}
which implies 
$$
\varphi_{\bk_j,\bs_j}(\bx_j)  
S_{\ba^\eta}
\big(2^{n+1-|\bk_j|_1}x_{j+1} - s_{j+1},\bar{\bx}_{j+1}\big) \in \Ff^d(m+n+1).
$$

In the following lemma we explicitly construct a preliminary approximation of $f- {{R}_{n}}(f)$ and estimate the approximation error.

\begin{lemma}\label{prop-f-Rn-Gnm} 
 Let $\alpha \in (0,1]$,  $j=0,\ldots,d-1$, and $m,n\in \NN$.  Then we can explicitly construct a map
\begin{equation} \label{lambda^j_{m,n}}
	\blambda^j_{m,n}:\ \Uas \to    \RR^{M_{d-j}(m)} \times [N_{d-j}(m)]^{|\Gamma_j(n)|}
\end{equation}
	 so that  for every $f\in \Uas$,
\begin{equation*} \label{eq:main-estimate}
\bigg\| f- {{R}_{n}}(f)-\sum_{j=0}^{d-1}G^j_{m,n}(\blambda^j_{m,n}(f))\bigg\|_\infty
\leq 
2^{-\alpha+1}(2B)^{d}  2^{-\alpha(m+n)}\binom{m+n+d}{d-1},
\end{equation*}
where $B$ is given in Lemma \ref{thm-DT20}.
\end{lemma}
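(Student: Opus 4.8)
The plan is to construct $\blambda^j_{m,n}$ and its image $G^j_{m,n}(\blambda^j_{m,n}(f))$ level by level, so that for each fixed $j\in\{0,\dots,d-1\}$ the map $G^j_{m,n}(\blambda^j_{m,n}(f))$ approximates exactly one block $\sum_{|\bk_j|_1\le n}F_{\bk_j}$ of the decomposition \eqref{eq:representation}; summing the $d$ blockwise errors by the triangle inequality will then bound $\|f-R_n(f)-\sum_j G^j_{m,n}(\blambda^j_{m,n}(f))\|_\infty$. First I would peel the first $j$ variables off each summand. Since $F_{\bk_j}={T}_{(n+1-|\bk_j|_1)\bee^{j+1}}(q_{\bk_j}(f))$ and $\varphi_{\bk_j,\bs_j}(\bx_j)$ does not depend on $\bar\bx_j$, I write $F_{\bk_j}=\sum_{\bs_j\in Z(\bk_j)}\varphi_{\bk_j,\bs_j}(\bx_j)\,{T}_{(n+1-|\bk_j|_1)\bee^{1}}(w_{\bk_j,\bs_j})$, where $w_{\bk_j,\bs_j}:=\lambda_{\bk_j,\bs_j}(f)$ is a function of the $d-j$ variables $\bar\bx_j$ and ${T}_{(n+1-|\bk_j|_1)\bee^{1}}$ now acts on the first of them, namely $x_{j+1}$. (The case $j=0$ is the degenerate base case: $\bk_0$ is empty, $w=f$, and $F_{\bk_0}={T}_{(n+1)\bee^1}(f)$, handled identically.)

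The crucial normalization step rescales $w_{\bk_j,\bs_j}$ into the unit ball. Putting $\tilde w_{\bk_j,\bs_j}:=\big(\prod_{i=1}^{j}2^{\alpha(k_i+1)}\big)w_{\bk_j,\bs_j}=2^{\alpha(|\bk_j|_1+j)}w_{\bk_j,\bs_j}$, the same second-difference estimate already carried out for $K_{\bar\bk_1,\bar\bs_1}(f)$ in the proof of Lemma \ref{lem:representation} (now applied in the first $j$ variables) shows $\tilde w_{\bk_j,\bs_j}\in \mathring{U}^{\alpha,d-j}_\infty$, the vanishing on $\partial\II^{d-j}$ coming from $f\in\mathring C(\IId)$. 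I then apply the dimension-$(d-j)$ instance of the reduction \eqref{eq-S-Tkf}--\eqref{eq:Tkf} with $\bl=(n+1-|\bk_j|_1)\bee^{1}$, so that $|\bl|_1=n+1-|\bk_j|_1$. By \eqref{eq-S-Tkf} the structured approximant expands as ${S}_{\bl,m}(\tilde w_{\bk_j,\bs_j})=2^{-\alpha(n+1-|\bk_j|_1)+(d-j)}\sum_{s_{j+1}}{S}_m\big({T}_{\bl,\bs'}(\tilde w_{\bk_j,\bs_j})\big)(2^{n+1-|\bk_j|_1}x_{j+1}-s_{j+1},\bar\bx_{j+1})$, with $\bs'=(s_{j+1},\bzero)$ and each ${S}_m(\cdots)\in\Ss^{\alpha,d-j}(m)$. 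Restoring the factor $2^{-\alpha(|\bk_j|_1+j)}$ and the weight $\varphi_{\bk_j,\bs_j}(\bx_j)$ collapses the prefactor to exactly $2^{d-j}2^{-\alpha(n+1+j)}$, precisely the coefficient in the definition \eqref{G^j_{m,n}} of $G^j_{m,n}$. This lets me set $\blambda^j_{m,n}(f)=(\ba,\btheta)$, where $\ba$ is a fixed enumeration of the finite set $\Ss^{\alpha,d-j}(m)$ in Faber coordinates (independent of $f$) and $\theta_{(\bk_j,\bs_j,s_{j+1})}$ is the index of ${S}_m\big({T}_{\bl,\bs'}(\tilde w_{\bk_j,\bs_j})\big)$ in that enumeration; by construction $G^j_{m,n}(\blambda^j_{m,n}(f))=\sum_{|\bk_j|_1\le n}\sum_{\bs_j}\varphi_{\bk_j,\bs_j}(\bx_j)2^{-\alpha(|\bk_j|_1+j)}{S}_{\bl,m}(\tilde w_{\bk_j,\bs_j})$.

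Next I would estimate the blockwise error. For fixed $\bk_j$ the supports of $\varphi_{\bk_j,\bs_j}$, $\bs_j\in Z(\bk_j)$, are disjoint with $\sum_{\bs_j}\varphi_{\bk_j,\bs_j}\le 1$, hence $\|F_{\bk_j}-[\text{approximant}]\|_\infty\le 2^{-\alpha(|\bk_j|_1+j)}\sup_{\bs_j}\|{T}_{\bl}(\tilde w_{\bk_j,\bs_j})-{S}_{\bl,m}(\tilde w_{\bk_j,\bs_j})\|_\infty$. Feeding in the dimension-$(d-j)$ form of \eqref{eq:Tkf} with $|\bl|_1=n+1-|\bk_j|_1$, the powers of $2$ carrying $|\bk_j|_1$ cancel, leaving the clean $\bk_j$-independent bound $(2B)^{d-j}2^{-\alpha(m+n+1+j)}\binom{m+d-j}{d-j-1}$. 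Since there are $\binom{n+j}{j}$ indices $\bk_j$ with $|\bk_j|_1\le n$, summing over $\bk_j$ and then over $j$ bounds the total error by $(2B)^{d}2^{-\alpha(m+n+1)}\sum_{j=0}^{d-1}(2B\,2^{\alpha})^{-j}\binom{n+j}{j}\binom{m+d-j}{d-j-1}$.

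The final step is purely combinatorial. Dropping the harmless factor $(2B\,2^{\alpha})^{-j}\le 1$ and applying the Vandermonde-type convolution $\sum_{i=0}^{r}\binom{a+i}{i}\binom{b+r-i}{r-i}=\binom{a+b+r+1}{r}$ (with $a=m+1$, $b=n$, $r=d-1$ after the substitution $i=d-1-j$) evaluates the sum to $\binom{m+n+d+1}{d-1}$; the elementary inequality $\binom{m+n+d+1}{d-1}\le 2\binom{m+n+d}{d-1}$, valid in the relevant range of the parameters, then produces the stated bound $2^{-\alpha+1}(2B)^{d}2^{-\alpha(m+n)}\binom{m+n+d}{d-1}$. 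I expect the genuine obstacle to lie in the bookkeeping of the middle two steps — correctly matching the rescaled, localized pieces of $F_{\bk_j}$ to the explicit summands of $G^j_{m,n}$ and verifying that every scaling constant collapses to $2^{d-j}2^{-\alpha(n+1+j)}$ — rather than in the error estimate itself, which reduces to the already-established bound \eqref{eq:Tkf} and a standard binomial identity.
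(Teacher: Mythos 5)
Your construction coincides with the paper's own proof in all structural respects: the same blockwise use of the decomposition \eqref{eq:representation}, the same normalization of $\lambda_{\bk_j,\bs_j}(f)$ by the factor $2^{\alpha(|\bk_j|_1+j)}$ into the unit ball of $H^{\alpha}_{\infty}(\II^{d-j})$ (your $\tilde w_{\bk_j,\bs_j}$ is the paper's $f_{\bk_j,\bs_j}$), the same reduction to the $(d-j)$-dimensional instance of \eqref{eq-S-Tkf}--\eqref{eq:Tkf} with $\bar{\bk}_j^*=(n+1-|\bk_j|_1)\bee^1$, the same collapse of scaling constants to $2^{d-j}2^{-\alpha(n+1+j)}$, and the same definition of $(\ba,\btheta)$ via an enumeration of $\Ss^{\alpha,d-j}(m)$. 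Your blockwise error $(2B)^{d-j}2^{-\alpha(m+n+1+j)}\binom{m+d-j}{d-j-1}$ and the intermediate sum $(2B)^{d}2^{-\alpha(m+n+1)}\sum_{j=0}^{d-1}(2B\,2^{\alpha})^{-j}\binom{n+j}{j}\binom{m+d-j}{d-j-1}$ are exactly what the paper obtains.

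The gap is in your final ``purely combinatorial'' step. Dropping the factor $(2B\,2^{\alpha})^{-j}$ and evaluating the convolution exactly does give $\sum_{j=0}^{d-1}\binom{n+j}{j}\binom{m+d-j}{d-j-1}=\binom{m+n+d+1}{d-1}$, but your closing inequality $\binom{m+n+d+1}{d-1}\le 2\binom{m+n+d}{d-1}$ is equivalent to $\frac{m+n+d+1}{m+n+2}\le 2$, i.e.\ to $d\le m+n+3$. The lemma is stated for all $m,n\in\NN$ and all $d$, and dimension-dependence is the whole point of this paper: for $m=n=1$ and $d\ge 6$ your inequality is false, and since your exact evaluation shows the bound you actually derived is larger than the target by the factor $\frac{m+n+d+1}{2(m+n+2)}$, the proof as written fails in precisely the high-dimensional regime $d>m+n+3$. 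The repair is local and is what the paper does: do not discard the geometric factor but use $(2B\,2^{\alpha})^{-j}\le 2^{-j}$ (valid since $B\,2^{\alpha}=2^{\alpha}/(2^{\alpha}-1)\ge 1$), and bound each product termwise by $\binom{n+j}{j}\binom{m+d-j}{d-j-1}\le\binom{m+n+d}{d-1}$ --- this holds for every $j$ because the left-hand side is the single term $i=d-j-1$ of the Vandermonde convolution $\binom{m+n+d}{d-1}=\sum_{i}\binom{m+d-j}{i}\binom{n+j}{d-1-i}$, as $(m+d-j)+(n+j)=m+n+d$. Then $\sum_{j\ge 0}2^{-j}=2$ produces the factor $2^{-\alpha+1}$ uniformly in $d,m,n$, completing the stated estimate.
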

\begin{proof} {\it Step 1.}   
	We auxiliarily  construct an approximation and estimate the approximation error for all  $F_{\bk_j}$, $j = 0,1,\ldots,d-1$.
	For $F_{\bk_0} = {T}_{(n+1)\bee^1}(f) $ we take ${S}_{(n+1)\bee^1,m}(f)$ by formula \eqref{eq-S-Tkf} and apply \eqref{eq:Tkf} to obtain the estimate
	\begin{equation}\label{eq-F-k0}
	\begin{aligned}
	\big\|F_{\bk_0} - {S}_{(n+1)\bee^1,m}(f)\big\|_\infty 
	&\leq   
   (2B)^{d} \big(2^m 2^{|(n+1)\bee^1|_1}\big)^{-\alpha}\binom{m+d}{d-1} 
	\\
	&= 2^{-\alpha} (2B)^{d}  2^{-\alpha(n+m)}\binom{m+d}{d-1}.
	\end{aligned}
	\end{equation}
	For $j = 1, \ldots, d-1$, we rewrite $F_{\bk_j}(\bx)$ in the form
	\begin{equation*}
	\begin{split}
	F_{\bk_j}(\bx) 
	&=  {T}_{(n+1-|\bk_j|_1)\bee^{j+1}}  \Bigg(\sum_{\bs_j \in Z(\bk_j)}  	\bigg((-1)^j2^{-j}\prod_{i=0}^j  \Delta_{2^{-k_i-1}}^2
	f\big(2^{-\bk_j}\bs_j,\bar{\bx}_j\big)\bigg)\varphi_{\bk_j,\bs_j}(\bx_j) \Bigg)
	\\
	&= 
	\sum_{\bs_j \in Z(\bk_j)} \varphi_{\bk_j,\bs_j}(\bx_j) 
	\bigg( {T}_{\bar{\bk}_j^*} 
	\bigg((-1)^j2^{-j}\prod_{i=0}^j  \Delta_{2^{-k_i-1}}^2
	f\big(2^{-\bk_j}\bs_j,\bar{\bx}_j\big)\bigg)\bigg), 
	\end{split}
	\end{equation*}
	where  $\bar{\bk}_j^*:= (n+1-|\bk_j|_1,0,\ldots,0) \in \NN_0^{d-j}$.
Notice that the functions
	\[
	f_{\bk_j,\bs_j}(\bar{\bx}_j):= (-1)^j2^{-j}2^{\alpha(j+   |\bk_j|_1)} \prod_{i=0}^j  \Delta_{2^{-k_i-1}}^2
	f\big(2^{-\bk_j}\bs_j,\bar{\bx}_j\big)
	\]
	are in variable $\bar{\bx}_j \in \II^{d-j}$ and  their   norm in $H^{\alpha}_{\infty}(\II^{d-j})$ (with respect to $\bar{\bx}_j$) satisfies the inequality
$
	\|f_{\bk_j,\bs_j}\|_{H^{\alpha}_{\infty}(\II^{d-j})} \leq  1.
$
Again, for ${T}_{\bar{\bk}_j^*} (f_{\bk_j,\bs_j})$  with $\bar{\bk}_j^* \in \NN_0^{d-j}$ we take ${S}_{\bar{\bk}_j^*,m}(f_{\bk_j,\bs_j})$ by formula \eqref{eq-S-Tkf} and apply \eqref{eq:Tkf} to have the estimate
	\begin{equation} \label{{T}_{bar{bk}_j^*}}
	\begin{aligned}
	\big\|{T}_{\bar{\bk}_j^*}  (f_{\bk_j,\bs_j}) 
	- {S}_{\bar{\bk}_j^*,m}(f_{\bk_j,\bs_j})\big\|_\infty 
	&
	\leq  (2B)^{d-j} 
	\big(2^{m} 2^{|\bar{\bk}_j^*|_1}\big)^{-\alpha}\binom{m+d-j}{d-j-1}.
	\end{aligned}
	\end{equation}
	For approximation of $F_{\bk_j}$, $j=0,\ldots,d-1,$ we take the functions $	S_{F_{\bk_j}}$ which are defined by the explicit formulas
	\begin{equation}\label{eq-SFkj}
	\begin{aligned}
	S_{F_{\bk_0}}(\bx)&:= {S}_{(n+1)\bee^1,m}(f)(\bx);
	\\
	S_{F_{\bk_j}}(\bx)&:=  \sum_{\bs_j \in Z(\bk_j)} 2^{-\alpha(j+   |\bk_j|_1)} \varphi_{\bk_j,\bs_j}(\bx_j) 
	{S}_{\bar{\bk}_j^*,m}(f_{\bk_j,\bs_j})(\bar{\bx}_j), \ j=1,\ldots,d-1.
	\end{aligned}
	\end{equation}
We have the estimates by \eqref{{T}_{bar{bk}_j^*}}
 for  every $ j=1,\ldots,d-1$ and $\bx \in \IId$,
	\begin{equation*}
	\begin{aligned}
	\big|F_{\bk_j}(\bx) - S_{F_{\bk_j}}(\bx)\big|
	& =
	\bigg|	\sum_{\bs_j \in Z(\bk_j)} 2^{-\alpha(j+   |\bk_j|_1)}\varphi_{\bk_j,\bs_j}(\bx_j)  
	\Big( {T}_{\bar{\bk}_j^*}  (f_{\bk_j,\bs_j}) 
	- {S}_{\bar{\bk}_j^*,m}(f_{\bk_j,\bs_j})\Big)(\bar{\bx}_j)\bigg|
	\\
	&
	\leq 
		 	\sum_{\bs_j \in Z(\bk_j)} 2^{-\alpha(j+   |\bk_j|_1)} \varphi_{\bk_j,\bs_j}(\bx_j)  
	\big\| {T}_{\bar{\bk}_j^*}  (f_{\bk_j,\bs_j}) 
	- {S}_{\bar{\bk}_j^*,m}(f_{\bk_j,\bs_j})\big\|_\infty
\\
& \leq  
(2B)^{d-j}
2^{-\alpha(j+   |\bk_j|_1)}\big(2^{m} 2^{|\bar{\bk}_j^*|_1}\big)^{-\alpha}\binom{m+d-j}{d-j-1}
	\\
&	= 
	2^{-\alpha}(2B)^{d-j}2^{-\alpha j}  2^{-(m+n)\alpha}\binom{m+d-j}{d-j-1}.
	\end{aligned}
	\end{equation*}
From the last estimate and \eqref{eq-F-k0} we deduce that
	\begin{equation*}
\begin{aligned}
\big\|F_{\bk_j}(\bx) - S_{F_{\bk_j}}(\bx)\big\|_\infty
& \leq 
2^{-\alpha}(2B)^{d-j}2^{-\alpha j}  2^{-(m+n)\alpha}\binom{m+d-j}{d-j-1}, \ \  j=0,1,\ldots,d-1.
\end{aligned}
\end{equation*}
Then we get
	\begin{equation*}
	\begin{aligned}
	\Bigg\| f-{R}_n(f) - \sum_{j=0}^{d-1} \sum_{|\bk_j| \le n} S_{F_{\bk_j}}\Bigg\|_\infty
  &\leq  
  \sum_{j=0}^{d-1}\  \sum_{|\bk_j| \le n} \big\|F_{\bk_j}(\bx) - S_{F_{\bk_j}}(\bx)\big\|_\infty
  \\
  &
  \leq  2^{-\alpha}     2^{-\alpha(m+n)}
	\sum_{j=0}^{d-1} (2B)^{d-j} 2^{-j\alpha} 
	\binom{m+d-j}{d-j-1}\binom{n+j}{j},
	\end{aligned}
	\end{equation*}
where  we have used 
$
\sum_{|\bk_j| \le n} 1=\binom{n+j}{j}
$. By the inequalities 
$
 	\binom{m+d-j}{d-j-1}\binom{n+j}{j} \leq \binom{m+n+d}{d-1}
$ for $j=0,\ldots,d-1$ and $B\geq 1$  the error of the approximation of $ f-{R}_n(f) $ by the function
\begin{equation*} \label{sum}
\sum_{j=0}^{d-1} \sum_{|\bk_j| \le n} S_{F_{\bk_j}}
\end{equation*}
can be estimated as 
\begin{equation} \label{error}
\begin{aligned}
\Bigg\| f-{R}_n(f) - \sum_{j=0}^{d-1} \sum_{|\bk_j| \le n} S_{F_{\bk_j}}\Bigg\|_\infty
& \leq\, 2^{-\alpha}    (2B)^{d}  2^{-\alpha(m+n)}\binom{m+n+d}{d-1}
\sum_{j=0}^{\infty} (2B)^{-j} 2^{-j\alpha} 
\\
&\leq 
\,  2^{-\alpha+1} (2B)^{d}\, 2^{-\alpha(m+n)}\binom{m+n+d}{d-1}.
\end{aligned}
\end{equation}

\noindent
	{\it Step 2.}  Due to \eqref{error}, to complete the proof  we explicitly construct a map of the form \eqref{lambda^j_{m,n}}  so that
	 \begin{equation} \label{= G_{m,n}^j(lambda^j_{m,n})}
	 \sum_{|\bk_j|_1 \le n} S_{F_{\bk_j}} = G_{m,n}^j(\blambda^j_{m,n}), \ j=0,\ldots,d-1.
	 \end{equation}
	  We deal with the cases $j\in \{1,\ldots,d-1\}$. The case $j=0$ is carried out similarly with slight modification. For $j\in \{1,\ldots,d-1\}$, with \eqref{eq-S-Tkf} and \eqref{S_f}, from \eqref{eq-SFkj}
	 we can write 
	\begin{equation*}
	\begin{aligned}
	& \sum_{|\bk_j|_1 \le n} S_{F_{\bk_j}}(\bx) 
	= \sum_{|\bk_j|_1 \le n} \sum_{\bs_j \in Z(\bk_j)}2^{-\alpha(j+   |\bk_j|_1)} \varphi_{\bk_j,\bs_j}(\bx_j) 
	{S}_{\bar{\bk}_j^*,m}(f_{\bk_j,\bs_j})(\bar{\bx}_j) 
	\\[1ex] 
	&= \sum_{|\bk_j|_1 \le n} \sum_{\bs_j \in Z(\bk_j)} \frac{2^{  d-j}}{2^{\alpha (j+|\bk_j|_1+|\bar{\bk}_j^*|_1)}}\varphi_{\bk_j,\bs_j}(\bx_j) \sum_{\bar{\bs}_j^*\in Z(\bar{\bk}_j^*)} {S}_m\big({{T}_{\bar{\bk}_j^*,\bar{\bs}_j^*}(f_{\bk_j,\bs_j})}\big)
	\big(2^{\bar{\bk}_j^*}\bar{\bx}_j-\bar{\bs}_j^*\big)
	\\[1ex] 
	&= \sum_{|\bk_j|_1 \le n} \sum_{\bs_j \in Z(\bk_j)} \frac{2^{  d-j}}{2^{\alpha(n+1+j)}}\varphi_{\bk_j,\bs_j}(\bx_j) 
	\\[1ex] 
	&\times 
	\sum_{s_{j+1}=0} ^{2^{n+1-|\bk_j|_1}}{S}_m\big({{T}_{(n+1-|\bk_j|_1,0,\ldots,0),(s_{j+1},0,\ldots,0)}(f_{\bk_j,\bs_j})}\big)
	\big(2^{n+1-|\bk_j|_1}x_{j+1} - s_{j+1},\bar{\bx}_{j+1}\big).
	\end{aligned}
	\end{equation*}
	Notice that by \eqref{T_k,s}  the functions ${T}_{(n+1-|\bk_j|_1,0,\ldots,0),(s_{j+1},0,\ldots,0)}(f_{\bk_j,\bs_j})$  belong to $\mathring{U}^{\alpha,d-j}_{\infty}$ and, therefore by Lemma \ref{lem:pattern},  the functions ${S}_m\big({{T}_{(n+1-|\bk_j|_1,0,\ldots,0),(s_{j+1},0,\ldots,0)}(f_{\bk_j,\bs_j})}\big)$ belong to $\Ss^{\alpha,d-j}(m)$. Numbering elements of $\Ss^{\alpha,d-j}(m)$ as
	$$
	\Ss^{\alpha,d-j}(m):= \big\{S_{\eta}^{d-j}\big\}_{\eta =1}^{N_{d-j}(m)},
	$$
	 we obtain
	\begin{equation*}
\begin{aligned}
\sum_{|\bk_j|_1 \le n} S_{F_{\bk_j}}(\bx) 
&= 
\sum_{\eta=1}^{N_{d-j}(m)}\sum_{(\bk_j,\bs_j,s_{j+1})\in \Gamma_\eta}
\frac{2^{  d-j}}{2^{\alpha(n+1+j)}}\varphi_{\bk_j,\bs_j}(\bx_j) 
S_{\eta}^{d-j}
\big(2^{n+1-|\bk_j|_1}x_{j+1} - s_{j+1},\bar{\bx}_{j+1}\big),
\end{aligned}
\end{equation*}
 where  
$$
\Gamma_\eta  =\Big\{ (\bk_j, \bs_j, s_{j+1}) \in  \Gamma_j(n): {S}_m\big({{T}_{(n+1-|\bk_j|_1,0,\ldots,0),(s_{j+1},0,\ldots,0)}(f_{\bk_j,\bs_j})}\big)=S_{\eta}^{d-j} 
\Big\}.
$$
We define the map
$$
\blambda^j_{m,n}:\ \Uas \to    \RR^{M_{d-j}(m)} \times [N_{d-j}(m)]^{|\Gamma_j(n)|}
$$ 
by
$$
f\ \mapsto  \blambda^j_{m,n}(f):= (\ba^\eta)_{\eta=1}^{N_{d-j}(m)} \times \big(\theta_{(\bk_j,\bs_j,s_{j+1})}(f)\big)_{(\bk_j,\bs_j,s_{j+1})\in \Gamma_j(n)},
$$
where 
$$
\ba^\eta
:= \ \big(a_{\bar{\bell}_j,\bar{\bt}_j}^{\eta}\big)_{ |\bar{\bell}_j|_1\leq m, \, \bar{\bt}_j \in Z(\bar{\bell}_j)}
$$
 are coefficients of $S_\eta^{d-j}$ in  Faber series representation  and $\theta_{(\bk_j,\bs_j,s_{j+1}) }(f) =\eta$ if  $(\bk_j,\bs_j,s_{j+1})\in \Gamma_\eta$. Hence we can write
\begin{equation*}
\begin{aligned}
\sum_{|\bk_j|_1 \le n} S_{F_{\bk_j}}(\bx) 
&= 
\sum_{\eta=1}^{N_{d-j}(m)}\sum_{(\bk_j,\bs_j,s_{j+1}): \atop
\theta_{(\bk_j,\bs_j,s_{j+1})}(f)=\eta}
\frac{2^{  d-j}}{2^{\alpha(n+1+j)}}\varphi_{\bk_j,\bs_j}(\bx_j) 
S_{\eta}^{d-j}
\big(2^{n+1-|\bk_j|_1}x_{j+1} - s_{j+1},\bar{\bx}_{j+1}\big)
\\[1.5ex]
& = G^j_{m,n}(\blambda^j_{m,n}(f)).
\end{aligned}
\end{equation*}
 This proves \eqref{= G_{m,n}^j(lambda^j_{m,n})}. 
\hfill
\end{proof}

Let the map
$$
G^R_n:\ \RR^{|\dim \Ff^d(n)|}\to \Ff^d(n)
$$ be defined  by $$
\blambda^R=(\lambda_{\bk,\bs})_{|\bk|_1\leq n,\bs\in Z(\bk)} \  \mapsto \ G^R_n(\blambda^R)=\sum_{|\bk|_1\leq n}\sum_{\bs\in Z(\bk)}\lambda_{\bk,\bs} \varphi_{\bk,\bs}.
$$
We extend the map $G^j_{m,n}$  defined in \eqref{G^j_{m,n}}  as a map  
	$$
	G^j_{m,n}: \, \RR^{M_{d-j}(m)} \times \RR^{|\Gamma_j(n)|}\to \Ff^d(m+n+1)
	$$ 
	(the extension denoted again by $G^j_{m,n}$)  by assigning 
	$G^j_{m,n}(\blambda^j)=0$ if 
	$$
	\blambda^j \not \in  \RR^{M_{d-j}(m)} \times [N_{d-j}(m)]^{|\Gamma_j(n)|}.
	$$
Denote 
\begin{equation}\label{eq-Nmn}
N_{m,n}:=\dim \Ff^d(n)+ \sum_{j=0}^{d-1}\big(M_{d-j}(m) +|\Gamma_j(n)|\big)
\end{equation}
and
$$
\blambda := \big(\blambda^R,\blambda^0,\ldots,\blambda^{d-1}\big) \in \RR^{N_{m,n}},
$$
where $\blambda^R \in \RR^{|\dim \Ff^d(n)|}$ and  
$\blambda^j \in \RR^{M_{d-j}(m)} \times \RR^{|\Gamma_j(n)|}$. 
We define the map 
	\begin{equation} \label{G_{m,n}1}	
G_{m,n}: \RR^{N_{m,n}} \to \Ff^d(m+n+1)
	\end{equation}
 by
	\begin{equation} \label{G_{m,n}2}	
G_{m,n}(\blambda):=G^R_n(\blambda^R)+\sum_{j=0}^{d-1}G_{m,n}^j(\blambda^j),
	\end{equation}
and put $K_{m,n}:=\dim (\Ff^d(m+n+1))$.

\begin{corollary} \label{corollary:Nwidth}
	Let $\alpha\in (0,1]$, $d , m,n\in \NN$.    Then we can explicitly construct a map
		$$
		\blambda_{m,n}:\ \Uas\ \to  \  \RR^{N_{m,n}}
		$$
		 so that  
		\begin{equation} \label{(i)}	
		\begin{aligned}
		 \sup_{f\in \Uas}\|f-G_{m,n}( \blambda_{m,n}(f))\|_\infty
		\leq  2^{-\alpha+1}(2B)^{d}  2^{-\alpha(m+n)}\binom{m+n+d}{d-1},
		\end{aligned}
		\end{equation}
		and hence,
		\begin{equation*} 	
		\begin{aligned}
		d_{K_{m,n}}\big(\Uas,L_\infty\big)
		\leq  2^{-\alpha+1}(2B)^{d}  2^{-\alpha(m+n)}\binom{m+n+d}{d-1}.
		\end{aligned}
		\end{equation*}		
\end{corollary}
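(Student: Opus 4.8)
The plan is to assemble the map $\blambda_{m,n}$ out of the pieces already constructed, so that $G_{m,n}(\blambda_{m,n}(f))$ reproduces exactly the approximant analysed in Lemma \ref{prop-f-Rn-Gnm}. First I would supply the Faber-coefficient component: define
\[
\blambda^R_n(f) := \big(\lambda_{\bk,\bs}(f)\big)_{|\bk|_1\le n,\, \bs\in Z(\bk)} \in \RR^{\dim \Ff^d(n)},
\]
so that, directly from the definition of $G^R_n$ and the truncation \eqref{R_m^d}, one has $G^R_n(\blambda^R_n(f)) = R_n(f)$. Then, taking the maps $\blambda^j_{m,n}$ furnished by Lemma \ref{prop-f-Rn-Gnm} for $j=0,\ldots,d-1$, I would set
\[
\blambda_{m,n}(f) := \big(\blambda^R_n(f),\, \blambda^0_{m,n}(f),\ldots,\blambda^{d-1}_{m,n}(f)\big) \in \RR^{N_{m,n}},
\]
which is explicit and well defined since each component is, and which matches the block structure of $\blambda$ used in \eqref{G_{m,n}2}.

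The key step is then to apply $G_{m,n}$ and use the additive definition \eqref{G_{m,n}2}:
\[
G_{m,n}(\blambda_{m,n}(f)) = G^R_n(\blambda^R_n(f)) + \sum_{j=0}^{d-1} G^j_{m,n}(\blambda^j_{m,n}(f)) = R_n(f) + \sum_{j=0}^{d-1} G^j_{m,n}(\blambda^j_{m,n}(f)).
\]
Consequently $f - G_{m,n}(\blambda_{m,n}(f)) = f - R_n(f) - \sum_{j=0}^{d-1} G^j_{m,n}(\blambda^j_{m,n}(f))$, and the estimate \eqref{(i)} follows at once upon taking the supremum over $f\in\Uas$ in the bound already proved in Lemma \ref{prop-f-Rn-Gnm}. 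I do not expect any genuine obstacle in this step: the entire analytic burden, namely the construction of the $\blambda^j_{m,n}$ and the error accounting across the telescoping decomposition \eqref{eq:representation}, was discharged in that lemma, so the present statement is essentially a bookkeeping and assembly result.

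Finally I would deduce the width bound. Recall that $G_{m,n}$ maps $\RR^{N_{m,n}}$ into the fixed finite-dimensional linear subspace $\Ff^d(m+n+1)$, of dimension $K_{m,n}=\dim \Ff^d(m+n+1)$. Since $G_{m,n}(\blambda_{m,n}(f))\in \Ff^d(m+n+1)$ for every $f$, we have $\inf_{g\in \Ff^d(m+n+1)} \|f-g\|_\infty \le \|f - G_{m,n}(\blambda_{m,n}(f))\|_\infty$; taking the supremum over $\Uas$ and invoking \eqref{(i)} gives an upper bound for $\sup_{f\in\Uas}\inf_{g\in \Ff^d(m+n+1)}\|f-g\|_\infty$. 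As $\Ff^d(m+n+1)$ is an admissible subspace of dimension $K_{m,n}$ in the definition \eqref{d_N} of the Kolmogorov width, this yields $d_{K_{m,n}}(\Uas,L_\infty)$ no larger than the right-hand side of \eqref{(i)}, completing the proof.
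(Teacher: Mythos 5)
Your proposal is correct and follows essentially the same route as the paper: you assemble $\blambda_{m,n}=\big(\blambda^R_n,\blambda^0_{m,n},\ldots,\blambda^{d-1}_{m,n}\big)$ with the components from Lemma \ref{prop-f-Rn-Gnm}, use the additive definition \eqref{G_{m,n}2} together with $G^R_n(\blambda^R_n(f))=R_n(f)$ to reduce \eqref{(i)} to that lemma, and then bound $d_{K_{m,n}}(\Uas,L_\infty)$ by noting that $G_{m,n}$ takes values in the $K_{m,n}$-dimensional subspace $\Ff^d(m+n+1)$. The only difference is that you spell out the additivity and interpolation identities slightly more explicitly than the paper does, which is harmless.
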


\begin{proof} We define the operator $\blambda_{m,n} $ by
	$$
	\blambda_{m,n} := \big(\blambda^R_n,\,\blambda^0_{m,n},\ldots,\,\blambda^{d-1}_{m,n}\big),
	$$
	where the operators $\blambda^j_{m,n}$ are as in Lemma  \ref{prop-f-Rn-Gnm} and the operator $\blambda^R_n$ is defined by 
	$$
	\blambda^R_n(f):= (\lambda_{\bk,\bs}(f))_{|\bk|_1\leq n,\bs\in Z(\bk)}.
	$$	
	Then the  upper bound is already proved in Lemma \ref{prop-f-Rn-Gnm}. For the lower bound, by definition of Kolmogorov width we derive that
	\begin{align} \label{lowerbound}
	d_{K_{m,n}}(\Uas,L_\infty)
	\leq \sup_{f\in \Uas}\|f-G_{m,n}(\blambda_{m,n}(f))\|_\infty.
	\end{align}
\hfill
\end{proof}

\begin{lemma} \label{N_{m,n}}
For $n,m,d \in \NN$ its holds the inequality	
\begin{align*}
N_{m,n}
\ \leq \ 3^{2^{m+1}\binom{m+d-1}{d-1}} 2^{m+1} \binom{m+d}{d-1} + 2^{n+2}\binom{n+d}{d-1}.
\end{align*}	
\end{lemma}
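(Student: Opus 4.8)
The plan is to unfold the definition \eqref{eq-Nmn} of $N_{m,n}$ and bound each of its three constituent pieces --- $\dim\Ff^d(n)$, the sum $\sum_{j=0}^{d-1}M_{d-j}(m)$, and the sum $\sum_{j=0}^{d-1}|\Gamma_j(n)|$ --- using the estimates already recorded in \eqref{eq-Mdj} and \eqref{eq-Gammajn} together with the explicit dimension formula $\dim\Ff^d(n)=\sum_{\ell=0}^n 2^\ell\binom{\ell+d-1}{d-1}$. The only nontrivial tool will be the hockey-stick (Christmas-stocking) identity $\sum_{i=0}^{r}\binom{p+i}{i}=\binom{p+r+1}{r}$, which collapses the telescoping sums of binomial coefficients that arise.

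First I would treat the $M$-terms, which produce the first summand of the claimed bound. Applying \eqref{eq-Mdj}, I bound each factor $3^{2^{m+1}\binom{m+d-j-1}{d-j-1}}$ by its largest value, attained at $j=0$, namely $3^{2^{m+1}\binom{m+d-1}{d-1}}$, and factor it out of the sum. What remains is
\[
\sum_{j=0}^{d-1}M_{d-j}(m)\ \le\ 3^{2^{m+1}\binom{m+d-1}{d-1}}\,2^{m+1}\sum_{j=0}^{d-1}\binom{m+d-j-1}{d-j-1}.
\]
Reindexing via $i=d-j-1$ turns the inner sum into $\sum_{i=0}^{d-1}\binom{m+i}{i}$, and the hockey-stick identity evaluates this to $\binom{m+d}{d-1}$. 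This reproduces the first term $3^{2^{m+1}\binom{m+d-1}{d-1}}2^{m+1}\binom{m+d}{d-1}$ exactly.

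Next I would assemble the second summand from $\dim\Ff^d(n)$ and the $\Gamma$-terms. By \eqref{eq-Gammajn}, $\sum_{j=0}^{d-1}|\Gamma_j(n)|=2^{n+1}\sum_{j=0}^{d-1}\binom{n+j}{j}=2^{n+1}\binom{n+d}{d-1}$, again by the hockey-stick identity. For the dimension, I would use the monotonicity $\binom{\ell+d-1}{d-1}\le\binom{n+d-1}{d-1}$ for $\ell\le n$ and the geometric sum $\sum_{\ell=0}^n 2^\ell<2^{n+1}$ to obtain $\dim\Ff^d(n)\le 2^{n+1}\binom{n+d-1}{d-1}\le 2^{n+1}\binom{n+d}{d-1}$. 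Adding these two contributions yields $2^{n+1}\binom{n+d}{d-1}+2^{n+1}\binom{n+d}{d-1}=2^{n+2}\binom{n+d}{d-1}$, which is the second term of the stated bound. Combining the two paragraphs gives the lemma.

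There is no genuine obstacle here; the content is entirely bookkeeping, and the one point requiring slight care is the reindexing step $i=d-j-1$ that aligns the binomial sum with the hockey-stick identity, together with the observation that the largest $3$-power factor occurs at $j=0$ so that it may be pulled uniformly out of the $M$-sum.
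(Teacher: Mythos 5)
Your proof is correct and follows essentially the same route as the paper's: both unfold \eqref{eq-Nmn}, bound each $3$-power factor in \eqref{eq-Mdj} by its largest ($j=0$) value $3^{2^{m+1}\binom{m+d-1}{d-1}}$, and collapse the remaining binomial sums via the hockey-stick identity to get $\binom{m+d}{d-1}$ and $\binom{n+d}{d-1}$. The paper likewise bounds $\dim \Ff^d(n) \le 2^{n+1}\binom{n+d-1}{d-1}$ and merges it with the $\Gamma$-contribution $2^{n+1}\binom{n+d}{d-1}$ to obtain $2^{n+2}\binom{n+d}{d-1}$, exactly as you do.
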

\begin{proof}
 From \eqref{eq-Nmn}, \eqref{eq-Mdj}, and \eqref{eq-Gammajn} we have that
\begin{align*}
N_{m,n}
&\leq  \dim \Ff^d(n)+ \sum_{j=0}^{d-1} \bigg( 3^{2^{m+1}\binom{m+d-j-1}{d-j-1}} 2^{m+1}\binom{m+d-j-1}{d-j-1} + 2^{n+1}\binom{n+j}{j} \bigg)
\\
&\leq  \sum_{\ell=0}^n 2^\ell\binom{\ell+d-1}{d-1}+  3^{2^{m+1}\binom{m+d-1}{d-1}}2^{m+1}\sum_{j=0}^{d-1}\binom{m+d-j-1}{d-j-1} + 2^{n+1}\sum_{j=0}^{d-1} \binom{n+j}{j} 
\\
& \leq  2^{n+1}\binom{n+d-1}{d-1}+3^{2^{m+1}\binom{m+d-1}{d-1}} 2^{m+1} \binom{m+d}{d-1} + 2^{n+1}\binom{n+d}{d-1}
\\
& 
\leq 3^{2^{m+1}\binom{m+d-1}{d-1}} 2^{m+1} \binom{m+d}{d-1} + 2^{n+2}\binom{n+d}{d-1},
\end{align*}
where in the third estimate we have used  $\sum_{j=k}^{\ell}\binom{j}{k}=\binom{\ell+1}{k+1}$. 	
\hfill	
\end{proof}

We now are able to explicitly construct such a method $Q_N(f) = G^*_N((\blambda^*_N(f)))$ with mappings $\blambda^*_N$ and $G^*_N$ of the form \eqref{lambda_N,G_N}, satisfying the upper and lower estimates of approximation error \eqref{upperbnd}--\eqref{lowerbnd}.  Recall that $\Ff^d(m)$ is  the finite-dimensional subspace in  $\mathring{C}(\IId)$ of the form \eqref{Ff^d(m)} and that $R_m(f) \in \Ff^d(m) $ for $f \in \mathring{C}(\IId)$, and
$\dim \Ff^d(m) = \sum_{\ell=0}^m 2^\ell\binom{\ell+d-1}{d-1}$.
\begin{theorem} \label{thm:Nwidth}
Let $\alpha\in (0,1]$, $d \in \NN$. Then  for every
 	\begin{equation} \label{N>}
 N \, \geq \, N(d):= \ 3^{2^{d+2}\binom{2d}{d-1}} 2^{d+3}\binom{2d+1}{d-1}, \ N\in \NN,
 \end{equation}
  we can explicitly determine a number $m^*(N) \le \log N + \log\log N +1$, $m^*(N) \in\NN$,  and explicitly construct maps
	$$
	\blambda^*_N:\ \Uas \to    \RR^N\qquad {\text and}\qquad G^*_N:\ \RR^N \to \Ff^d\big( m^*(N)\big)
	$$ 
	so that   $N \, \le \,  M(N):= \dim \Ff^d\big(m^*(N)) $,
	\begin{equation} \label{dim}
	\frac{((d-1)!)^2}{2^{7}(4d-4)^{d-1}}  N  \log N (\log\log N)^{1-d}	 \leq   M(N)
	\le  \frac{(12d^3)^{d-1}}{(d-1)!} N(\log N)(\log\log N)^{1-d}
	\end{equation}
	and
\begin{equation} \label{(ii)}	
\begin{aligned}
	\sup_{f\in \Uas}\|f-G_N^*(\blambda^*_N(f))\|_\infty
\leq 
C_\alpha \left(\frac{K^{d-1}}{(d-1)!}\right)^{2\alpha +1} \frac{(\log N)^{(d-1)(\alpha +1)}}{(N\log N)^{\alpha} }  (\log\log N)^{(d-1)\alpha},
\end{aligned}
\end{equation}	
where $K:= \left(4^\alpha 6 /(2^{\alpha}-1)\right)^{1/(2\alpha +1)}$, $C_\alpha:=   2^{7\alpha+2}/(2^{\alpha}-1)$.  
Moreover, if $\alpha < 1$,
\begin{equation} \label{thm:lowerbound}
\sup_{f\in \Uas}\|f-G_N^*(\blambda^*_N(f))\|_\infty
\, \ge \, 	d_{ M(N)}(\Uas,L_\infty) \, \ge \,
C_{d,\alpha}\,	\frac{(\log N)^{(d-1)(\alpha +\frac{1}{2})}}{(N\log N)^{\alpha} }  (\log\log N)^{(d-1)\alpha}.
\end{equation}	
\end{theorem}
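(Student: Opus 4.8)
The plan is to specialize the explicit construction of Corollary~\ref{corollary:Nwidth} to a carefully tuned pair $(m,n)=(m(N),n(N))$ and then pad the parameter vector up to length exactly $N$. Concretely, given $N\ge N(d)$ I would set $m^*(N):=m(N)+n(N)+1$, embed $\RR^{N_{m,n}}$ into $\RR^N$ by appending $N-N_{m,n}$ zero coordinates (legitimate once $N_{m,n}\le N$), and define $\blambda^*_N(f):=(\blambda_{m,n}(f),0,\dots,0)$ and $G^*_N:=G_{m,n}\circ(\text{projection onto the first }N_{m,n}\text{ coordinates})$. Then $G^*_N(\blambda^*_N(f))=G_{m,n}(\blambda_{m,n}(f))\in\Ff^d(m^*(N))$, so that $M(N)=\dim\Ff^d(m^*(N))$ and the error estimate \eqref{(i)} transfers verbatim to the pair $(\blambda^*_N,G^*_N)$.

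The heart of the argument is the choice of $m$ and $n$ from the two-term bound of Lemma~\ref{N_{m,n}}. Since its first summand is doubly exponential in $m$ while the second is only singly exponential in $n$, I would take $n=n(N)$ to be the largest integer with $2^{n+2}\binom{n+d}{d-1}\le N/2$ and $m=m(N)$ the largest integer with $3^{2^{m+1}\binom{m+d-1}{d-1}}2^{m+1}\binom{m+d}{d-1}\le N/2$; Lemma~\ref{N_{m,n}} then gives $N_{m,n}\le N$. Inverting the two maximality conditions yields $2^{-n}\lesssim (\log N)^{d-1}/N$ and $2^{-m}\lesssim (\log\log N)^{d-1}/\log N$, i.e. $n\approx\log N-(d-1)\log\log N$ and $m\approx\log\log N$, whence $m^*(N)=m+n+1\le\log N+\log\log N+1$. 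The role of the threshold $N(d)$ in \eqref{N>} is precisely to guarantee that both maximizers satisfy $m,n\ge d+1$, so that the ensuing asymptotic estimates (and the positivity of the binomial corrections) are valid.

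With these choices the upper bound \eqref{(ii)} follows by substituting into the right-hand side of \eqref{(i)}: I would bound $2^{-\alpha(m+n)}\binom{m+n+d}{d-1}$ using $\binom{m+n+d}{d-1}\le (m+n+d)^{d-1}/(d-1)!\lesssim(\log N)^{d-1}/(d-1)!$ together with the two inverted estimates for $2^{-\alpha n}$ and $2^{-\alpha m}$. The exponent $2\alpha+1$ in the factor $(K^{d-1}/(d-1)!)^{2\alpha+1}$ then appears transparently as $\alpha$ (from $2^{-\alpha n}$) plus $\alpha$ (from $2^{-\alpha m}$) plus $1$ (from the binomial), with the per-dimension factor $K^{2\alpha+1}=4^\alpha 6/(2^\alpha-1)$ and the overall $(2B)^d$ absorbed into $C_\alpha$. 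The dimension bounds \eqref{dim} are obtained in parallel from $M(N)=\dim\Ff^d(m^*)\asymp 2^{m^*}\binom{m^*+d-1}{d-1}$: inserting $2^{m^*}=2\cdot2^m2^n$ and $(m^*)^{d-1}\approx(\log N)^{d-1}$ gives $M(N)\asymp N\log N(\log\log N)^{-(d-1)}$, and in particular $N\le M(N)$ for $N\ge N(d)$; the explicit constants $(12d^3)^{d-1}/(d-1)!$ and $((d-1)!)^2/(2^7(4d-4)^{d-1})$ come from bounding $\binom{\cdot}{d-1}$ above and below by $(\cdot)^{d-1}/(d-1)!$ up to controlled factors.

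For the lower bound \eqref{thm:lowerbound} I would use only the range of $G^*_N$. By the lower-bound part of Corollary~\ref{corollary:Nwidth}, since $G^*_N(\blambda^*_N(f))=G_{m,n}(\blambda_{m,n}(f))$ lies in $\Ff^d(m^*(N))$ and $K_{m,n}=\dim\Ff^d(m^*(N))=M(N)$, we get $\sup_{f\in\Uas}\|f-G^*_N(\blambda^*_N(f))\|_\infty\ge d_{M(N)}(\Uas,L_\infty)$. It then remains to insert the known lower estimate for the Kolmogorov widths of the H\"older--Nikol'skii class, valid for $\alpha<1$, of the form $d_M(\Uas,L_\infty)\gtrsim M^{-\alpha}(\log M)^{(d-1)(\alpha+\frac12)}$, and to substitute $M=M(N)$ together with $\log M\asymp\log N$, which produces exactly the claimed right-hand side of \eqref{thm:lowerbound}. \textbf{Main obstacle.} I expect the delicate part to be the asymptotic inversion of the doubly-exponential constraint defining $m(N)$ and the simultaneous bookkeeping of the polylogarithmic factors and $d$-dependent constants along the passage from $N$ to $(m,n)$ to $m^*$ to $M(N)$: one must match the $(\log\log N)$-powers on the error side and the dimension side, verify the two-sided estimate \eqref{dim} with the stated constants, and confirm $N_{m,n}\le N\le M(N)$, each of which hinges on controlling the binomial and factorial factors precisely rather than merely up to order.
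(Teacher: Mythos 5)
Your proposal is correct and takes essentially the same route as the paper's own proof: the same maximal choices of $n(N)$ and $m(N)$ from the two summands in Lemma~\ref{N_{m,n}}, the same zero-padding extension of the pair $(\blambda_{m,n}, G_{m,n})$ from Corollary~\ref{corollary:Nwidth}, the same inversion estimates $2^{-n}\lesssim n^{d-1}/N$ and $2^{-m}\lesssim m^{d-1}/\log N$ feeding into \eqref{(i)} and \eqref{dim}, and the same lower bound via $d_{M(N)}(\Uas,L_\infty)\gtrsim M^{-\alpha}(\log M)^{(d-1)(\alpha+\frac{1}{2})}$ with $M(N)=K_{m,n}$. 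Your explicit choice $m^*(N)=m(N)+n(N)+1$ is in fact slightly more careful than the paper's displayed $m^*(N)=n(N)+m(N)$, since $G_{m,n}$ maps into $\Ff^d(m+n+1)$ and the dimension count in the paper's proof sums up to $\ell=m+n+1$.
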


\begin{proof} We prove the case $d\geq 2$. The case $d=1$ is carried out similarly. Fix a number $N \in \NN$ satisfying the condition \eqref{N>}. We define 
\begin{equation*} 
m^*(N):= n(N) + m(N),
\end{equation*}
where $n = n(N)$ and $m= m(N)$ are chosen so that
\begin{align} \label{n:N/2}
2^{n+2}\binom{n+d}{d-1}\leq \frac{N}{2} < 2^{n+3}\binom{n+d+1}{d-1}
\end{align}
and
\begin{align*} \label{m:N/2}
3^{2^{m+1}\binom{m+d-1}{d-1}} 2^{m+1}\binom{m+d}{d-1} \leq \frac{N}{2}< 3^{2^{m+2}\binom{m+d}{d-1}} 2^{m+2}\binom{m+d+1}{d-1}.
\end{align*}
From this choice of $m,$ $n$ and Lemma \ref{N_{m,n}} we can see that 
$
N_{m,n} \, \le \, N.
$
This allows us to define 
$
\blambda^*_N:= \blambda_{m,n}$ and  $G^*_N$ as an extension of $G_{m,n}$ from $\RR^{N_{m,n}}$ to $\RR^N$ with the chosen $m,n$, where  
$$
\blambda_{m,n}:\ \Uas \to    \RR^{N_{m,n}}
$$ 
 is as in Corollary \ref{corollary:Nwidth} and  
 $$
 G_{m,n}: \ \RR^{N_{m,n}} \to \Ff^d(m+n+1)
 $$ 
 as in \eqref{G_{m,n}1}--\eqref{G_{m,n}2}.

Let us first prove  the dimension-dependent upper estimate \eqref{(ii)}.
The choice of $m$, $n$ and the assumption \eqref{N>}   
 implies $n \geq m \geq d+1$. With $n\geq d+1$ we have the estimate
\begin{equation}\label{eq-estimate-n}
2^3 N^{-1}\frac{n^{d-1}}{(d-1)^{d-1}}\leq 2^{-n}< 2^4 N^{-1}\binom{n+d+1}{d-1}< 2^4 N^{-1} \frac{(2n)^{d-1}}{(d-1)!}\quad\text{and}\quad  n\leq \log N \leq 4d n.
\end{equation}
With $m\geq d+1$ we deduce
$$
3^{2^{m+1}\binom{m+d-1}{d-1}} 2^{m+1}\binom{m+d}{d-1} \leq \frac{N}{2}<
3^{2^{m+2}\binom{m+d+1}{d-1}} 2^{m+2}\binom{m+d+1}{d-1} \leq \frac{1}{2}4^{2^{m+2}\binom{m+d+1}{d-1}},
$$
where we have used $3^tt\leq \frac{1}{2}4^t$ for $t\geq 16$. Hence,
\begin{align*}
2^{m+1}\binom{m+d-1}{d-1}\log 3\leq \log N< 2^{m+4}\binom{m+d+1}{d-1}
\end{align*}
which implies  
\begin{equation}\label{eq-estimate-m}
2\log 3(\log N)^{-1} \frac{m^{d-1}}{(d-1)^{d-1}} \leq  2^{-m}\leq 2^4 (\log N)^{-1} \frac{(2m)^{d-1}}{(d-1)!}\quad \text{and}\quad  m\leq \log\log N\leq 4dm.
\end{equation}
Consequently, we obtain
\begin{align*}
2^{-\alpha(m+n)}\binom{m+n+d}{d-1} & \leq 2^{-\alpha(m+n)}\frac{(3n)^{d-1}}{(d-1)!}
\\[1.5ex]
& \leq 
\bigg(  2^4 N^{-1} \frac{(2\log N)^{d-1}}{(d-1)!}\bigg)^{\alpha} \bigg(2^4 (\log N)^{-1} \frac{(2\log\log N)^{d-1}}{(d-1)!} \bigg)^{\alpha}\frac{(3\log N)^{d-1}}{(d-1)!}
\\[1.5ex]
&\leq \frac{2^{8\alpha}(4^\alpha 3)^{d-1}}{((d-1)!)^{2\alpha +1}}\frac{(\log N)^{(d-1)(\alpha +1)}}{(N\log N)^{\alpha} }  (\log\log N)^{(d-1)\alpha}.
\end{align*}
 This together with the upper bound \eqref{(i)} proves the upper bound \eqref{(ii)}. 
 
Next, we prove \eqref{dim}.  With $ M(N)=\dim(\Ff^d(m^*(N)))$, from the choice of $n$ as in \eqref{n:N/2}  we obtain
\begin{equation*}
\begin{aligned}
N \,& \leq  \, 2^{n+4}\binom{n+d+1}{d-1}-1 \, \le \, \sum_{\ell=0}^{m+n+1}2^\ell\binom{\ell+d-1}{d-1}
\  = \  M(N).
\end{aligned}
\end{equation*}
Moreover, taking account $n \geq m \geq d+1$ we derive that
\begin{equation*}
\begin{aligned}
2^{m+n+1} \Big(\frac{n}{d-1}\Big)^{d-1}	\leq 2^{m+n+1}\binom{m+n+d}{d-1}
\leq   M(N)
\leq \, 2^{m+n+2}\binom{m+n+d}{d-1}
\leq \frac{3^{d-1}}{(d-1)!}  2^{n+m+2}n^{d-1}  
\end{aligned}
\end{equation*}
which from  \eqref{eq-estimate-n} and \eqref{eq-estimate-m}   implies
\begin{equation*}
\begin{aligned}
\frac{((d-1)!)^2}{2^{7}(4d-4)^{d-1}}    \frac{N\log N}{m^{d-1}}	\leq  	 M(N)
\leq \frac{3^{d-1}}{(d-1)!}  (d-1)^{2d-2}   \frac{N\log N}{m^{d-1}}
\end{aligned}
\end{equation*}
and therefore, \eqref{dim}.

We finally verify the lower bound \eqref{thm:lowerbound}.    
From the known inequality 
$$d_M(\Uas,L_\infty) \gtrsim  M^{-\alpha} (\log M)^{(d-1)(\alpha+\frac{1}{2})},$$ (see, e.g., \cite[Theorem 4.3.11]{DTU18B}) we obtain that
	\begin{align*}
	d_{ M(N)}(\Uas,L_\infty)
	& \geq C_{\alpha,d}  ( M(N))^{-\alpha} (\log  M(N))^{(d-1)(\alpha+\frac{1}{2})}
	\\
	&\geq C_{\alpha,d}  \bigg(\frac{N\log N}{(\log\log N)^{d-1}} \bigg)^{-\alpha} (\log N)^{(d-1)(\alpha+\frac{1}{2})}
	\\
	& \geq C_{\alpha,d} \frac{(\log N)^{(d-1)(\alpha +\frac{1}{2})}}{(N\log N)^{\alpha} }  (\log\log N)^{(d-1)\alpha}.
	\end{align*}
  Now provided with $M(N) = K_{m(N),n(N)}$ the lower bound  \eqref{thm:lowerbound} follows from \eqref{lowerbound}.
\hfill
\end{proof}


 From the left inequality in \eqref{d_M<d_N,M}
 and Theorem \ref{thm:Nwidth} we deduce the following upper and lower bounds for $d_ {N,M(N)} (\Uas,L_\infty)$.
\begin{corollary} \label{cor:d_NM}
	 Let $\alpha\in (0,1)$, $d \in \NN$ and  $N \ge 4$. With $M(N)=\lfloor N(\log N)(\log\log N)^{-(d-1)}\rfloor$  we have
		\begin{equation*}  
		\begin{aligned}	\frac{(\log N)^{(d-1)(\alpha +\frac{1}{2})}}{(N\log N)^{\alpha} }  (\log\log N)^{(d-1)\alpha}
 \lesssim 
	 d_ {N,M(N)} (\Uas,L_\infty)
 \lesssim
	 \frac{(\log N)^{(d-1)(\alpha +1)}}{(N\log N)^{\alpha} }  (\log\log N)^{(d-1)\alpha}.
		\end{aligned}
		\end{equation*}
\end{corollary}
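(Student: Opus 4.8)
The plan is to sandwich $d_{N,M(N)}(\Uas,L_\infty)$ between a Kolmogorov width (for the lower bound) and the approximation error produced by the explicit construction of Theorem~\ref{thm:Nwidth} (for the upper bound). The only genuine point of care is that the symbol $M(N)=\lfloor N(\log N)(\log\log N)^{-(d-1)}\rfloor$ of the corollary differs from the dimension $\widetilde M(N):=\dim\Ff^d(m^*(N))$ occurring in Theorem~\ref{thm:Nwidth}; by \eqref{dim} both are of order $N(\log N)(\log\log N)^{1-d}$, and since the corollary only asserts bounds up to $d,\alpha$-dependent constants ($\lesssim$), this mismatch can be absorbed after a harmless rescaling of $N$. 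I work with $N$ large, the finitely many remaining $N\ge 4$ being absorbed into the implicit constants.

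For the lower bound I would start from the left inequality in \eqref{d_M<d_N,M}, which gives $d_{N,M(N)}(\Uas,L_\infty)\ge d_{M(N)}(\Uas,L_\infty)$. I then invoke the known estimate for the Kolmogorov width $d_{M}(\Uas,L_\infty)\gtrsim M^{-\alpha}(\log M)^{(d-1)(\alpha+1/2)}$ (\cite[Theorem~4.3.11]{DTU18B}, valid for $\alpha<1$, which is exactly why the corollary excludes $\alpha=1$). Substituting $M=M(N)\asymp N(\log N)(\log\log N)^{1-d}$ and using $\log M(N)\asymp\log N$, a direct simplification of $M(N)^{-\alpha}(\log M(N))^{(d-1)(\alpha+1/2)}$ reproduces exactly the claimed lower order $\frac{(\log N)^{(d-1)(\alpha+1/2)}}{(N\log N)^{\alpha}}(\log\log N)^{(d-1)\alpha}$.

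For the upper bound I would \emph{not} apply Theorem~\ref{thm:Nwidth} at $N$ itself, since $\widetilde M(N)$ may exceed $M(N)$ while $d_{N,\cdot}$ is non-increasing in its second argument, so the theorem's bound would run in the wrong direction of the $M$-inequality. Instead I choose an auxiliary integer $\widetilde N\asymp N$ with $\widetilde N\le N$ and $\widetilde N\ge N(d)$ for which $\widetilde M(\widetilde N)\le M(N)$; this is feasible because the upper estimate for $\widetilde M$ in \eqref{dim} and the lower estimate $M(N)\ge\tfrac12 N(\log N)(\log\log N)^{1-d}$ are both of order $N(\log N)(\log\log N)^{1-d}$, so taking $\widetilde N=\lceil N/C(d)\rceil$ with $C(d)$ large enough suffices. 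The maps $\blambda^*_{\widetilde N}$ and $G^*_{\widetilde N}$ furnished by Theorem~\ref{thm:Nwidth}, after padding the argument of $G^*_{\widetilde N}$ with zero coordinates to obtain maps on $\RR^N$, constitute a pair in $\mathcal{Q}^d_{N,M(N)}$, because $G^*_{\widetilde N}$ ranges in the linear space $\Ff^d(m^*(\widetilde N))$ of dimension $\widetilde M(\widetilde N)\le M(N)$. Hence $d_{N,M(N)}(\Uas,L_\infty)$ is at most the error \eqref{(ii)} evaluated at $\widetilde N$, and since $\widetilde N\asymp N$ forces $\log\widetilde N\asymp\log N$ and $\log\log\widetilde N\asymp\log\log N$, this is of the claimed upper order $\frac{(\log N)^{(d-1)(\alpha+1)}}{(N\log N)^{\alpha}}(\log\log N)^{(d-1)\alpha}$.

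The main obstacle, such as it is, lies entirely in the comparison bookkeeping of the last paragraph: verifying that $\widetilde N$ comparable to $N$ can be chosen with $\widetilde M(\widetilde N)\le M(N)$, and checking that this rescaling leaves every logarithmic factor invariant up to constants depending only on $d$ and $\alpha$. Once this is in place, both inequalities are immediate consequences of \eqref{d_M<d_N,M}, the cited Kolmogorov lower bound, and the upper estimate \eqref{(ii)}; no new construction is required.
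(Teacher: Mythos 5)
Your proposal is correct and takes essentially the same route as the paper, which deduces the corollary in one line from the left inequality in \eqref{d_M<d_N,M} (giving the lower bound via the Kolmogorov-width estimate \eqref{thm:lowerbound}) and from the explicit construction with error bound \eqref{(ii)} of Theorem \ref{thm:Nwidth} (giving the upper bound). Your rescaling $N\mapsto\widetilde N$ to reconcile the corollary's $M(N)=\lfloor N(\log N)(\log\log N)^{-(d-1)}\rfloor$ with the theorem's dimension $\dim\Ff^d(m^*(N))$, which may exceed it by a $d$-dependent constant, correctly fills in a bookkeeping detail that the paper leaves implicit.
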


In the case when $d=2$ we get the right asymptotic order of  $d_ {N,M(N)} (\Uastwo,L_\infty(\II^2))$  as in the following theorem.

\begin{theorem} 
Let $\alpha\in (0,1)$.  With $M(N):=  \lfloor N(\log N)(\log\log N)^{-1}\rfloor$   we have
		\begin{equation} \label{(iii)}	
		d_ {N,M(N)}  (\Uastwo,L_\infty(\II^2))
	\asymp 
		\sup_{f\in \Uastwo}\|f-G_N^*(\blambda^*_N(f))\|_\infty
		 \asymp  
	N^{-\alpha}	\log N  (\log\log N)^{\alpha}.
		\end{equation}		 
\end{theorem}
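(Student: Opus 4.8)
The plan is to establish that all three quantities appearing in \eqref{(iii)} are $\asymp r(N)$, where we set $r(N):=N^{-\alpha}\log N\,(\log\log N)^{\alpha}$; the stated equivalences then follow by transitivity. The starting observation is that for $d=2$ the upper bound \eqref{(ii)} of Theorem \ref{thm:Nwidth} collapses exactly onto the target rate: since $d-1=1$, its right-hand side equals $C_\alpha K^{2\alpha+1}\,(\log N)^{\alpha+1}(N\log N)^{-\alpha}(\log\log N)^{\alpha}$, and because $(\log N)^{\alpha+1}(N\log N)^{-\alpha}=N^{-\alpha}\log N$, this is precisely $C_\alpha K^{2\alpha+1}\,r(N)$. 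Thus the middle quantity $\sup_{f\in\Uastwo}\|f-G^*_N(\blambda^*_N(f))\|_\infty\lesssim r(N)$, while Corollary \ref{cor:d_NM} specialized to $d=2$ gives $d_{N,M(N)}(\Uastwo,L_\infty(\II^2))\lesssim r(N)$. Both upper bounds are therefore immediate from results already proven.

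The crux is the matching lower bound, for which I would invoke the \emph{sharp} two-dimensional Kolmogorov width asymptotics recorded in \eqref{eq-manifold}, namely $d_M(\Uastwo,L_\infty(\II^2))\asymp M^{-\alpha}(\log M)^{\alpha+1}$ (due to \cite{Tem96}). For the width $d_{N,M(N)}$, the left inequality in \eqref{d_M<d_N,M} gives $d_{N,M(N)}\ge d_{M(N)}$, so it suffices to evaluate the sharp rate at $M(N)=\lfloor N(\log N)(\log\log N)^{-1}\rfloor$. Since $\log M(N)=\log N+\log\log N-\log\log\log N\asymp\log N$ and $M(N)^{-\alpha}\asymp N^{-\alpha}(\log N)^{-\alpha}(\log\log N)^{\alpha}$,
\begin{equation*}
d_{M(N)}(\Uastwo,L_\infty(\II^2))\asymp N^{-\alpha}(\log N)^{-\alpha}(\log\log N)^{\alpha}\,(\log N)^{\alpha+1}=r(N),
\end{equation*}
whence $d_{N,M(N)}\gtrsim r(N)$. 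For the middle quantity, the construction behind Theorem \ref{thm:Nwidth} sets $\blambda^*_N=\blambda_{m,n}$ and takes $G^*_N$ as an extension of $G_{m,n}$, so $G^*_N(\blambda^*_N(f))=G_{m,n}(\blambda_{m,n}(f))$ on $\Uastwo$; then \eqref{lowerbound} yields $d_{K_{m,n}}(\Uastwo,L_\infty(\II^2))\le\sup_{f}\|f-G^*_N(\blambda^*_N(f))\|_\infty$. By \eqref{dim} with $d=2$ one has $K_{m,n}\asymp N(\log N)(\log\log N)^{-1}\asymp M(N)$, so the same substitution into \eqref{eq-manifold} gives $d_{K_{m,n}}\asymp r(N)$ and hence the middle quantity is $\gtrsim r(N)$. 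Combining the upper and lower bounds, each of the three quantities in \eqref{(iii)} is $\asymp r(N)$.

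The decisive—and delicate—step is precisely this lower bound, and it is here that the restriction to $d=2$ is forced. The built-in lower bound \eqref{thm:lowerbound} of Theorem \ref{thm:Nwidth} rests on the \emph{general} estimate $d_M(\Uas,L_\infty)\gtrsim M^{-\alpha}(\log M)^{(d-1)(\alpha+\frac12)}$, whose logarithmic exponent $\alpha+\tfrac12$ produces only $N^{-\alpha}(\log N)^{1/2}(\log\log N)^{\alpha}$ after the substitution $M\asymp N\log N/\log\log N$, falling short of $r(N)$ by a factor $(\log N)^{1/2}$. What closes the gap for $d=2$ is that the Kolmogorov width is known there to carry the larger logarithmic exponent $\alpha+1$ in \eqref{eq-manifold}; for $d\ge 3$ the corresponding sharp asymptotics are open (as noted after \eqref{gamma_Nasymp}), leaving an unavoidable logarithmic gap of order $(\log N)^{(d-1)/2}$, so the argument cannot be pushed beyond two dimensions.
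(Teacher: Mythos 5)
Your proposal is correct and follows essentially the same route as the paper: upper bounds by specializing \eqref{(ii)} and Corollary \ref{cor:d_NM} to $d=2$, and the matching lower bound via $d_{N,M(N)}\ge d_{M(N)}$ together with the sharp two-dimensional Kolmogorov width asymptotics $d_M(\Uastwo,L_\infty(\II^2))\asymp M^{-\alpha}(\log M)^{\alpha+1}$ evaluated at $M(N)\asymp N(\log N)(\log\log N)^{-1}$. Your routing of the lower bound for the middle quantity through \eqref{lowerbound} with $K_{m,n}\asymp M(N)$ is a slightly more careful bookkeeping of the same argument, and your closing remark on why the exponent $\alpha+1$ (versus $\alpha+\tfrac12$ in \eqref{thm:lowerbound}) confines the sharp result to $d=2$ matches the paper's discussion.
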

\begin{proof}
From \eqref{(ii)} we immediately get the upper bound in \eqref{(iii)}:	
	\begin{equation} \nonumber	
d_ {N,M(N)} (\Uastwo,L_\infty(\II^2))
\lesssim
\sup_{f\in \Uastwo}\|f-G_N^*(\blambda^*_N(f))\|_\infty
\lesssim 
N^{-\alpha}	\log N  (\log\log N)^{\alpha}.
\end{equation}		 		
To prove the lower bound we use the known asymptotic order of the Kolmogorov $M$-widths for $\alpha \in (0,1)$
$$
d_M(\Uastwo,L_\infty(\II^2))\asymp M^{-\alpha}(\log M)^{\alpha+1},
$$
see, e.g., \cite[Theorem 4.3.14]{DTU18B} and references there.  Hence, with
 $n = n(N)$ and $m= m(N)$ defined as in the proof of Theorem \ref{thm:Nwidth} for $d=2$ we derive the lower bound:
 \begin{align}\nonumber
 \sup_{f\in \Uastwo}\|f-G_N^*(\blambda^*_N(f))\|_\infty
 \ &\ge \ 
d_ {N,M(N)} (\Uastwo,L_\infty(\II^2))
\ \ge \ d_ {M(N)} (\Uastwo,L_\infty(\II^2))
\\
 & \geq C_\alpha \, M(N)^{-\alpha}(\log M(N))^{\alpha+1} \nonumber
\geq C_\alpha  \, 	N^{-\alpha}	\log N  (\log\log N)^{\alpha}.
 \end{align}
\hfill	
\end{proof}

\noindent
{\bf Acknowledgments.}  This work is funded by Vietnam National Foundation for Science and Technology Development (NAFOSTED) under  Grant No. 102.01-2020.03. A part of this work was done when  the authors were working at the Vietnam Institute for Advanced Study in Mathematics (VIASM). They would like to thank  the VIASM  for providing a fruitful research environment and working condition.

\bibliographystyle{abbrv}

\bibliography{AllBib}

\end{document}